\documentclass[11pt, twoside]{article}
\usepackage{latexsym}
\usepackage{amsmath}
\usepackage{amssymb}
\usepackage[all]{xy}
\usepackage{amsfonts}
\usepackage{verbatim}
\usepackage{amsthm}
\usepackage{indentfirst}
\usepackage{mathrsfs}\usepackage{array}
\usepackage{epsfig}
\usepackage{xy}
\usepackage{tensor}
\usepackage{array}
\usepackage{stmaryrd}
\usepackage{graphicx,color}
\usepackage{xcolor}
\usepackage[colorlinks=true,linkcolor=blue,citecolor=blue]{hyperref}
\usepackage{tikz}
\usetikzlibrary{arrows,calc}
\usepackage{etex}
\usepackage{mathdots}
\usepackage{float}
\usepackage{graphics}
\usepackage{pdflscape}
\usepackage{extarrows}
\usepackage{anysize,hyperref}
\input xypic
\xyoption{all}
\usepackage{bm}
\usepackage[perpage,symbol]{footmisc}
\topmargin=-0.5truein \oddsidemargin=0truein
\evensidemargin=0truein \textwidth=6.2truein \textheight=9.6truein
\usepackage{setspace}
\linespread{1}
\SelectTips{cm}{10}

%%%% the commands of myself %%%%%%

\def\dr{\ar@{->}[r]}
\def\Ker{\mbox{\rm Ker}\,}

\begin{document}
\baselineskip=15pt
\title{\Large{\bf  Injectivity of modules over trusses
\footnotetext{$^\ast$Corresponding author.  ~Yongduo Wang is supported by the National Natural Science Foundation of China (Grant No. 10861143). ~Jian He is supported by Youth Science and Technology Foundation of Gansu Provincial (Grant No. 23JRRA825). ~Dejun Wu is supported by the National Natural Science Foundation of China (Grant No. 12261056).} }}
\medskip
\author{Yongduo Wang$^\ast$,~Shujuan Han,~Dengke Jia,~Jian He and Dejun Wu}
\date{}
\maketitle
\def\blue{\color{blue}}
\def\red{\color{red}}

\newtheorem{theorem}{Theorem}[section]
\newtheorem{lemma}[theorem]{Lemma}
\newtheorem{corollary}[theorem]{Corollary}
\newtheorem{proposition}[theorem]{Proposition}
\newtheorem{conjecture}{Conjecture}
\theoremstyle{definition}
\newtheorem{definition}[theorem]{Definition}
\newtheorem{question}[theorem]{Question}
\newtheorem{notation}[theorem]{Notation}
\newtheorem{remark}[theorem]{Remark}
\newtheorem{remark*}[]{Remark}
\newtheorem{example}[theorem]{Example}
\newtheorem{example*}[]{Example}

\newtheorem{construction}[theorem]{Construction}
\newtheorem{construction*}[]{Construction}

\newtheorem{assumption}[theorem]{Assumption}
\newtheorem{assumption*}[]{Assumption}

\baselineskip=17pt
\parindent=0.5cm

\begin{abstract}
\noindent As the dual notion of projective modules over trusses, injective modules over trusses are introduced. The Schanuel Lemmas on projective and injective modules over trusses are exhibited in this paper.\\[0.3cm]
\textbf{Keywords:} truss; Schanuel Lemma, projectivity, injectivity\\[0.1cm]
\textbf{2020 Mathematics Subject Classification:} 18G80; 18E10
\medskip
\end{abstract}
\medskip

\pagestyle{myheadings}
\markboth{\rightline {\scriptsize Yongduo Wang,~Shujuan Han,~Dengke Jia,~Jian He and Dejun Wu}}
         {\leftline{\scriptsize  Injectivity of modules over trusses}}
\section{Introduction}
The notion of heaps was introduced by H. Pr$\ddot{u}$fer \cite{hp}, R. Baer \cite{rb} and A. K. Su$\check{s}$kevi$\check{c}$ \cite{aks} in the 1920s. A heap is a set $H$ together with a ternary operation
$[---]: H\times H\times H\rightarrow H$ which is associative and satisfies the Mal'cev identities, that is,
\begin{equation*}
[[a,b,c],d,e] = [a,b,[c,d,e]] \qquad \text{and} \qquad [a,b,b] = a = [b,b,a]
\end{equation*}
for all $a,b,c,d,e\in H$. It exhibits that there is a deep connection between groups and heaps.

In 2019, trusses were introduced by T. Brzezi$\acute{n}$ski in \cite{tb} as structures describing two different distributive laws: the well$-$known ring distributivity and the one coming from the recently introduced braces, which are gaining popularity due to their roles in the study of the set$-$theoretic solutions of the Yang$-$Baxter equation. The brace distributive law appeared earlier in the context of quasi$-$rings of radical rings (see \cite{ch}). It turns out that rings and braces can be described elegantly by switching the group structure to a heap structure. This leads to the definition of a truss, which is a set $T$ with a ternary operation $[-, -, -]$ and a binary multiplication $\cdot$ satisfying some conditions, the crucial one being the generalisation of ring and brace distributivity: $a\cdot [b, c, d] = [a\cdot b, a\cdot c, a\cdot d]$ and $[b, c, d]\cdot a = [b\cdot a, c\cdot a, d\cdot a]$, for all $a, b, c, d\in T$. Due to this, we can jointly approach brace and ring theory.

A truss can be understood as a ring in which the Abelian group of addition has no specified neutral element. Also, every truss $T$ is a congruence class of a ring $R(T)$, the universal extension of $T$ into a ring (see \cite{rtb}). Trusses, even though close to rings, differ significantly as the category of trusses has no zero object. It is well known that, for a ring $R$, we must study modules over it, so it is natural to ask: what is the theory of modules over trusses? The notion of modules over trusses was posed and basic properties of it were given by T. Brzezi$\acute{n}$ski (see \cite{tb}). In recent years, modules over trusses were studied by S. Breaz, T. Brzezi$\acute{n}$ski, B. Rybolowicz and P. Saracco from different aspects (see \cite{stbp,stbp1,tb1,tb2,tbp}). In \cite{tbp}, T. Brzezi$\acute{n}$ski, B. Rybolowicz and P. Saracco gave the concept of projective modules over trusses. As the dual notion of projective modules over trusses, injective modules over trusses are introduced. The Schanuel Lemmas on projective and injective modules over trusses are exhibited in this paper.

\section{Preliminaries}
A heap is a set $H$ together with a ternary operation
$[---]: H\times H\times H\rightarrow H$ which is associative and satisfies the Mal'cev identities, that is,
\begin{equation*}
[[a,b,c],d,e] = [a,b,[c,d,e]] \qquad \text{and} \qquad [a,b,b] = a = [b,b,a]
\end{equation*}
for all $a,b,c,d,e\in H$.
A heap $H$ is said to be abelian if for all $a,b,c\in H$, $[a,b,c] = [c,b,a]$.

A heap morphism from $(H,[---])$ to $(H^\prime,[---])$ is a function $f:H\rightarrow H^\prime$ respecting the ternary operations, i.e., such that for all $x$, $y$, $z\in H$, $f([x,y,z])=[f(x),f(y),f(z)]$.
The category of heaps is denoted by Heap and the category of abelian heaps is denoted by Ah. A singleton set $\{\ast\}$  with the unique heap operation $[\ast,\ast,\ast]=\ast$, it is the terminal objective in the category of heaps, we denote it by $\star$. The empty set is the initial object. There is no zero objective in the category of heaps.

With every group $G$ we can associate a heap $H(G)=(G,[-,-,-])$ where $[a,b,c]=ab^{-1}c$ for all $a, b, c\in G$ and every morphism of group is automatically a morphism of heaps. With every non-empty heap $H$ and for a fixed $e\in H$, we can associate a group $G(H;e)$ and the binary operation is $a\cdot b=[a,e,b]$ for all $a, b\in H$. The inverse of $a\in G(H;e)$ is $a^{-1}=[e,a,e]$.

\begin{lemma}\label{le.se}{\rm \cite[Lemma 2.3]{tb1}}
Let $(H,[-,-,-])$ be a heap.
\begin{enumerate}
\item[{\rm (1)}] If $e$, $x$, $y\in H$ are such that $[x,y,e]=e$ or $[e,x,y]=e$, then $x=y$.
\item[{\rm (2)}] For all $v$, $w$, $x$, $y$, $z\in H$,
$$[v,w,[x,y,z]]=[v,[y,x,w],z].$$
\item[{\rm (3)}] For all $x$, $y$, $z\in H$,
$$[x,y,[y,x,z]]=[[z,x,y],y,x]=[x,[y,z,x],y]=z.$$
In particular, in the expression $[x,y,z]=w$, any three elements determine the fourth one.
\item[{\rm (4)}] If $H$ is abelian, then, for all $x_i$, $y_i$, $z_i\in H$, $i=1,2,3$,
$$[[x_1,x_2,x_3],[y_1,y_2,y_3],[z_1,z_2,z_3]]=[[x_1,y_1,z_1],[x_2,y_2,z_2],[x_3,y_3,z_3]].$$
\end{enumerate}
\end{lemma}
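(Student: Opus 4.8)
All four assertions are universally quantified identities (or implications between identities) in the ternary operation, and all hold vacuously when $H=\emptyset$, so the plan is to fix an element $e\in H$ and transport the problem into the group $G:=G(H;e)$ recalled above, in which $a\cdot b=[a,e,b]$, the unit is $e$, and $a^{-1}=[e,a,e]$. The key preliminary step is the translation formula
\begin{equation*}
[a,b,c]=a\cdot b^{-1}\cdot c\qquad(a,b,c\in H),
\end{equation*}
which I would obtain from two uses of associativity and one Mal'cev identity, namely $a\cdot b^{-1}\cdot c=\big[[a,e,[e,b,e]],e,c\big]=[[a,b,e],e,c]=[a,b,c]$. Granted this, every part collapses to a short computation in $G$.

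For (1), choose $e$ to be the element named in the hypothesis: then $[x,y,e]=e$ says $x\cdot y^{-1}=e$ and $[e,x,y]=e$ says $x^{-1}\cdot y=e$, each forcing $x=y$. For (2), the two sides become $v\,w^{-1}\,x\,y^{-1}\,z$ and $v\,(y\,x^{-1}\,w)^{-1}\,z=v\,w^{-1}\,x\,y^{-1}\,z$, hence agree. For (3), the three expressions become $x\,y^{-1}\,y\,x^{-1}\,z$, $z\,x^{-1}\,y\,y^{-1}\,x$ and $x\,(y\,z^{-1}\,x)^{-1}\,y=x\,x^{-1}\,z\,y^{-1}\,y$, all equal to $z$; the closing remark that any three of $x,y,z,w$ in $[x,y,z]=w$ determine the fourth is then just the solvability of a single linear equation in $G$. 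For (4), the abelianness hypothesis $[a,b,c]=[c,b,a]$, specialised at $b=e$, says exactly that $G$ is commutative; expanding both sides of the claimed equality as words in the $x_i,y_i,z_i$ and their inverses and gathering the $x$-, $y$- and $z$-factors (permissible by commutativity) shows both equal $x_1x_2^{-1}x_3\,y_1^{-1}y_2y_3^{-1}\,z_1z_2^{-1}z_3$.

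I would also record the basepoint-free route, which stays inside the heap axioms. Part (1) and the first two equalities of (3) fall out immediately; for instance $[x,y,[y,x,z]]=[[x,y,y],x,z]=[x,x,z]=z$ by associativity and Mal'cev. Part (2) follows from the one-line remark that $[p,q,s]=r$ implies $[p,q,z]=[r,s,z]$ (since $[r,s,z]=[[p,q,s],s,z]=[p,q,[s,s,z]]=[p,q,z]$): applying associativity to the left side of (2) reduces the claim to $[[v,w,x],y,[y,x,w]]=v$, which unwinds as $[v,w,[x,y,[y,x,w]]]=[v,w,[x,x,w]]=[v,w,w]=v$; and the third equality of (3) is then a direct consequence of (2) and a Mal'cev identity.

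The step I expect to cost the most work along this axiomatic route is (4), where the $3\times3$ interchange must be assembled from repeated applications of associativity and abelianness; this is precisely the point at which passing to $G(H;e)$ is worth its weight, since there it is one line. Accordingly, in the write-up I would make the group-theoretic argument the primary one and invoke the direct heap manipulations only for (1)--(3).
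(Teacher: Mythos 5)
Your proof is correct, but there is nothing in this paper to compare it against: the lemma is stated as a quotation of \cite[Lemma 2.3]{tb1} and no proof is given here, the result being taken as known background. Your route through the group $G(H;e)$ is sound: the translation formula $[a,b,c]=a\cdot b^{-1}\cdot c$ follows exactly as you compute, part (1) is handled by taking the basepoint to be the $e$ in the hypothesis, parts (2)--(3) are one-line word computations, and for (4) the specialisation of $[a,b,c]=[c,b,a]$ at $b=e$ does give commutativity of $G(H;e)$, after which both sides collapse to the same word. The source \cite{tb1} argues directly from the heap axioms, essentially your ``basepoint-free route''; the group-theoretic detour buys a genuinely shorter verification of the $3\times 3$ interchange in (4), at the modest cost of first checking that $G(H;e)$ is a group and that the heap bracket is recovered from it, which your translation formula supplies. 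Two presentational points for a write-up: for (2)--(4) say explicitly that the basepoint is an arbitrary element of the (nonempty) heap, the empty case being vacuous as you note; and in the reduction of (2) along the basepoint-free route, state the auxiliary remark (``$[p,q,s]=r$ implies $[p,q,z]=[r,s,z]$'') as a displayed step before using it, since it is the only place where the argument is not a literal application of associativity or a Mal'cev identity.
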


A subset $S$ of a heap $H$ that is closed under the heap operation is called a sub-heap of $H$. Every non-empty sub-heap $S$ of an abelian heap $H$ defines a congruence relation $\sim_S$ on $H$:
\begin{equation*}
a\sim_S b \quad \iff \quad \exists~s\in S,\ [a,b,s]\in S \quad \iff \quad \forall~s\in S,\ [a,b,s]\in S.
\end{equation*}
The equivalence classes of $\sim_S$ form an abelian heap with operation induced from that in $H$. Namely,
$
[\bar a, \bar b, \bar c] = \overline{[a,b,c]}
$, where $\bar x$ denotes the class of $x$ in $H/\sim_S$ for all $x\in H$. This is known as the {\em quotient heap} and it is denoted by $H/S$.
For any $s\in S$, the class of $s$ is equal to $S$.

If $\varphi: H\rightarrow K$ is a morphism of abelian heaps, then for all $e\in {\rm Im}\varphi$, the set
\begin{equation*}
\ker_e \varphi:=  \{ a\in H\; |\; \varphi(a)=e\}
\end{equation*}
is a sub-heap of $K$. Different choices of $e$ yielding an isomorphic as heaps and the quotient heap $H/\ker_e \varphi$ does not depend on the choice of $e$. Moreover, the sub-heap relation $\sim_{\ker_e}\varphi$ is the same as the kernel relation defined by: $a\,\Ker\varphi\,b$ if and only if $\varphi(a) = \varphi(b)$. Thus we write $\Ker\varphi$ for $\ker_e \varphi$ and we refer to it as the kernel of $\varphi$.

\begin{definition}{\rm \cite[Definition 3.1]{tb1}}
A truss is an algebraic system consisting of a set $T$, a ternary operation $[-,-,-]$ making $T$ into an Abelian heap, and an associative binary operation $\cdot$ which distributes over $[-,-,-]$, that is, for all $w$, $x$, $y$, $z\in T$,
\begin{equation*}
w[x,y,z]=[wx,wy,wz], \quad  [x,y,z]w=[xw,yw,zw].
\end{equation*}
A truss is said to be commutative(abelian) if the binary operation $\cdot$ is commutative.
\end{definition}
A heap homomorphism between two trusses is a truss homomorphism if it respects multiplications. The category of trusses and their morphisms is denoted by ${\rm Trs}$.

Let $T$ be a truss. A left $T$-module is an abelian heap $M$ together with an associative left action $\lambda_M: T\times M \rightarrow M$ of $T$ on $M$ that distributes over the heap operation. The action is denoted on elements by $t\cdot m = \lambda_M(t,m)$, with $t\in T$ and $m\in M$. Explicitly, the axioms of an action state that, for all $t,t',t''\in T$ and $m,m',m''\in M$,
\begin{subequations}
\begin{equation*}
t\cdot(t'\cdot m) = (tt')\cdot m,
\end{equation*}
\begin{equation*}
 [t,t',t'']\cdot m = [t\cdot m,t'\cdot m,t''\cdot m] ,
\end{equation*}
\begin{equation*}
 t\cdot [m,m',m''] = [t\cdot m,t\cdot m',t\cdot m''].
\end{equation*}
\end{subequations}
If $T$ is a unital  truss and the action satisfies $1\cdot m = m$, then we say that $M$ is a unital or normalised module. A submodule of a left $T$-module $M$ is a subset that is closed both under the heap operation and the action $\cdot$.

A module homomorphism is a homomorphism of heaps between two modules that also respects the actions. As it is customary in ring theory we often refer to homomorphisms of $T$-modules as to $T$-linear maps or morphisms.
The category of left $T$-modules is denoted by $T$-${\rm mod}$, that of left unital $T$-modules  by $ T_{1}$-${\rm mod}$. The terminal heap $\star$ and initial heap $\varnothing$, with the unique possible actions, are the terminal and the initial object in $T$-${\rm mod}$. It should be noted that, since $\star\neq\varnothing$, $T$-${\rm mod}$ do not have zero object.

An element $e$ of a left $T$-module $M$ is called an absorber provided that
\begin{equation*}
t\cdot e = e, \qquad \mbox{for all $t\in T$}.
\end{equation*}
The set of all absorbers in $M$  is denoted by ${\rm Abs}(M) = \{ m\in M\mid t\cdot m = m, \forall\,t\in T\}$.

\begin{proposition}{\rm \cite[Proposition 2.6]{tbp}}
Every epimorphism of $T$-modules is surjective.
\end{proposition}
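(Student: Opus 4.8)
\medskip
\noindent\emph{Proof idea.}
Let $f\colon M\to N$ be an epimorphism in $T$-${\rm mod}$. The plan is to prove $\Im f=N$ by contradiction: assuming $\Im f\subsetneq N$, I will produce two distinct $T$-linear maps $g,h\colon N\to P$ with $g\circ f=h\circ f$, contradicting right cancellability. Before doing so I would dispatch the degenerate case $M=\varnothing$. Then $g\circ f=h\circ f$ holds automatically for every parallel pair $g,h\colon N\to P$, so $f$ being an epimorphism forces $\Hom_{T}(N,P)$ to be a singleton for all $P$. Taking $P=\{x_0,x_1\}$, the two-element abelian heap equipped with the trivial action $t\cdot x_i=x_i$ (which satisfies the module axioms and makes every element an absorber), the two constant maps $N\to P$ are $T$-linear and, as soon as $N\ne\varnothing$, distinct; hence $N=\varnothing$ and $f$ is (vacuously) surjective. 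From now on $M\ne\varnothing$, so $\Im f$ is a non-empty submodule of $N$.

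The core construction is the quotient heap $N/\Im f$. Because $\Im f$ is closed under the action, the rule $t\cdot\overline{x}:=\overline{t\cdot x}$ is well defined: if $[x,y,s]\in\Im f$ with $s\in\Im f$, then $t\cdot[x,y,s]=[t\cdot x,\,t\cdot y,\,t\cdot s]\in\Im f$ while $t\cdot s\in\Im f$, so $t\cdot x\sim_{\Im f}t\cdot y$; the three module axioms then descend, so $N/\Im f$ is a $T$-module and the canonical projection $\pi\colon N\to N/\Im f$ is $T$-linear. Since the class of every element of $\Im f$ equals $\Im f$, for $s\in\Im f$ and $t\in T$ one gets $t\cdot\overline{s}=\overline{t\cdot s}=\overline{\Im f}$, i.e.\ $\overline{\Im f}$ is an absorber of $N/\Im f$. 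This makes the constant map $c\colon N\to N/\Im f$, $c(n)=\overline{\Im f}$, a $T$-linear map: it respects the heap operation because $[\overline{\Im f},\overline{\Im f},\overline{\Im f}]=\overline{\Im f}$, and it respects the action precisely because $\overline{\Im f}$ is an absorber.

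Finally I would compare $\pi$ and $c$. For each $m\in M$ we have $f(m)\in\Im f$, hence $\pi(f(m))=\overline{f(m)}=\overline{\Im f}=c(f(m))$, so $\pi\circ f=c\circ f$. On the other hand, for a non-empty sub-heap $S$ of an abelian heap the class of any $s\in S$ is $S$ itself, whence $N/S$ is a singleton if and only if $N=S$; as $\Im f\subsetneq N$, the surjection $\pi$ takes at least two values and is not constant, whereas $c$ is. Thus $\pi\ne c$, contradicting that $f$ is an epimorphism, and therefore $\Im f=N$.

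The only step with real content is the verification that $N/\Im f$ is again a $T$-module and that $\overline{\Im f}$ is an absorber of it; I expect no obstacle beyond this routine checking, since everything else is formal. The single genuine idea is the same one that works in module categories over rings: in $T$-${\rm mod}$ the image of a morphism is a submodule, so the ``cokernel'' quotient $N/\Im f$ exists, and its canonical absorber $\overline{\Im f}$ plays the role that the zero morphism plays in the ring-theoretic argument.
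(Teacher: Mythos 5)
Your argument is correct, and it is the standard one: since $\Im f$ is a submodule, the quotient $N/\Im f$ carries the induced $T$-module structure (the same structure the paper invokes in Lemma~\ref{lem.abs}), its distinguished class $\overline{\Im f}$ is an absorber, and comparing the canonical projection with the constant map at $\overline{\Im f}$ forces $N=\Im f$. Note that the paper itself offers no proof of this statement — it is quoted from \cite[Proposition 2.6]{tbp} — so there is nothing internal to compare against; your write-up is a sound self-contained substitute, and your separate treatment of the degenerate case $M=\varnothing$ (via the two-element module with trivial action) is a genuine point of care, since the empty module is initial in $T$-mod and the quotient construction needs $\Im f\neq\varnothing$.
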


\begin{proposition}{\rm \cite[Proposition 2.8]{tbp}}
Every monomorphism of $T$-modules is injective.
\end{proposition}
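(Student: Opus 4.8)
The plan is to run the classical ``kernel pair'' argument: a morphism that can be cancelled on the left must be injective on underlying sets. So let $f\colon M\to N$ be a monomorphism in $T$-$\mathrm{mod}$. If $M=\varnothing$ there is nothing to check, so assume $M\neq\varnothing$. The point will be to manufacture, out of $M$ and $f$ alone, a single $T$-module together with two parallel $T$-linear maps into $M$ that become equal after post-composing with $f$, and then to read off injectivity from the monomorphism property.

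First I would record that $T$-$\mathrm{mod}$ has binary products: the cartesian product $M\times M$, equipped with the componentwise heap operation $[(a,b),(c,d),(e,g)]=([a,c,e],[b,d,g])$ and the componentwise action $t\cdot(a,b)=(t\cdot a,t\cdot b)$, satisfies all the module axioms (they follow coordinatewise from those of $M$), and together with the two projections $\pi_1,\pi_2\colon M\times M\to M$ it is the categorical product, the mediating morphism of a pair $g,h\colon L\to M$ being $\ell\mapsto(g(\ell),h(\ell))$. Then I would set
$$K=\{(m_1,m_2)\in M\times M\mid f(m_1)=f(m_2)\}$$
and check it is a submodule of $M\times M$: it is closed under the heap operation because $f([m_1,m_1',m_1''])=[f(m_1),f(m_1'),f(m_1'')]$ (and likewise in the second slot), so that equality of the two coordinates is preserved, and it is closed under the action because $f(t\cdot m_1)=t\cdot f(m_1)$. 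Since $M\neq\varnothing$, the diagonal $\{(m,m)\}$ shows $K\neq\varnothing$, so $K$ is a bona fide object of $T$-$\mathrm{mod}$; the same verification works verbatim in $T_{1}$-$\mathrm{mod}$, since $K$ inherits normality of the action from $M$. Let $p_1,p_2\colon K\to M$ be the restrictions of $\pi_1,\pi_2$; they are $T$-linear, and by the very definition of $K$ one has $f\circ p_1=f\circ p_2$.

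Finally I would invoke the hypothesis: because $f$ is a monomorphism, $f\circ p_1=f\circ p_2$ forces $p_1=p_2$ as morphisms, hence as functions on $K$. Thus for every $(m_1,m_2)\in K$ we get $m_1=p_1(m_1,m_2)=p_2(m_1,m_2)=m_2$, i.e. $f(m_1)=f(m_2)$ implies $m_1=m_2$, which is exactly injectivity. I do not expect a genuine obstacle here; the only steps requiring (routine) care are verifying that $M\times M$ with the componentwise structure is the product in $T$-$\mathrm{mod}$ and that $K$ is a submodule, and the whole argument is formally dual to the proof that every epimorphism of $T$-modules is surjective (Proposition~2.6 above), which uses the cokernel pair instead. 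An alternative route — building the free $T$-module on one generator and testing $f$ against it — also works, but it requires identifying that free module explicitly, so the kernel-pair argument is the more economical one.
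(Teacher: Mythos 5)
Your kernel-pair argument is correct: $K=\{(m_1,m_2)\in M\times M\mid f(m_1)=f(m_2)\}$ is indeed a $T$-submodule of the product module (Lemma 2.7 of the paper gives the product structure), the two restricted projections are $T$-linear and equalized by $f$, and left-cancellability then forces $p_1=p_2$, i.e.\ injectivity, with the empty-domain case trivial. Note that this paper does not prove the statement at all --- it is quoted from \cite[Proposition 2.8]{tbp} as a preliminary --- so there is no in-paper proof to compare against; your argument is the standard one and is perfectly adequate.
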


\begin{definition}{\rm \cite[Definition 2.5]{stbp1}}
Let $M$ be a non-empty left $T$-module. For every $e \in M$, the action $\cdot_e\colon  T \times M \rightarrow M$, given by
\[t\cdot_e m = [t\cdot m,t\cdot e,e], \qquad \textrm{for all }m\in M, t \in T,\]
is called the \emph{$e$-induced action} or the \emph{$e$-induced module structure} on $M$ and denote it by $M^{(e)}$. We say that a subset $N\subseteq M$ is {\em an induced submodule} of $M$ if $N$ is a non-empty sub-heap of $M$ and $t\cdot_e n \in N$ for all $t\in T$ and $n,e\in N$.
\end{definition}
Different choices of $e$ yield an isomorphic induced modules and an iteration of an induced action gives an induced action. For all $T$-module morphisms $\varphi:M\rightarrow N$, this yields an analogue of the fist isomorphism theorem for $T$-module: $M/\Ker\varphi\cong {\rm Im}\varphi$.

If $R$ is a ring then we can consider its associated truss ${\rm T}(R)=({\rm H}(R,+),\cdot)$. Moreover, any $R$-module $M$ gives rise, in the same way, to a (unital) ${\rm T}(R)$-module ${\rm T}(M)=({\rm H}(M,+),\cdot)$, whose underlying abelian heap structure is induced by the abelian group one. This assignment gives rise to a functor
$${\rm T} : R\mbox{-}{\rm mod}\longrightarrow {\rm T}(R)\mbox{-}{\rm mod}, \quad (M,+,\cdot) \longmapsto (H(M,+),\cdot), \quad f \longmapsto  f.$$

Let $T$ be a truss (not necessarily unital) and let $\star$ denote the singleton $T$-module. We say that a sequence of non-empty $T$-modules $\xymatrix{M\ar[r]^{f} & N\ar[r]^{g} & P}$  is exact provided there exists $e\in {\rm Im}g$ such that ${\rm Im}f=\ker_{e}g$ as sets. Furthermore, if $e\in Abs(P)$, we say that the sequence of non-empty $T$-modules $\xymatrix{M\ar[r]^{f} & N\ar[r]^{g} & P}$  is Abs-exact

\begin{lemma}\label{lem.abs}{\rm \cite[Lemma 6.1]{tbp}}
Let $M,N,P$ be $T$-modules and $f:M \longrightarrow N$ and  $g:N \longrightarrow P$ be $T$-linear maps. There exist exact sequences
\begin{equation*}
\xymatrix{M\ar[r]^f & N \ar[r]^{g} & P}, \quad \xymatrix{\star \ar[r] & M^{(e)} \ar[r]^{f} & N^{(f(e))}} \quad \mbox{and}\quad  \xymatrix{N \ar[r]^{g} & P \ar[r] & \star}
\end{equation*}
if and only if
\begin{enumerate}
\item[{\rm (1)}]
$f$ is injective and
\item[{\rm (2)}]  $N/{\rm Im}f\cong P$ as $T$-modules,
\end{enumerate}
where the module structure on $N/{\rm Im}f$ is the one for which the canonical projection\\
$\pi:N \longrightarrow N/{\rm Im}f$ is $T$-linear.
\end{lemma}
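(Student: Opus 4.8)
The plan is to prove the two directions separately, treating the three claimed exact sequences as a package that is equivalent to the conjunction of (1) and (2). First I would unwind the definition of exactness given just before the statement: a sequence $\xymatrix{A\ar[r]^{u} & B\ar[r]^{v} & C}$ is exact when there is some $e\in\mathrm{Im}\,v$ with $\mathrm{Im}\,u=\ker_e v$ as subsets of $B$. So for the middle sequence $\xymatrix{\star\ar[r] & M^{(e)}\ar[r]^{f} & N^{(f(e))}}$, exactness at $M^{(e)}$ says precisely that the image of the unique map $\star\to M^{(e)}$, which is the singleton $\{e\}$, equals $\ker_{e}f=\{m\in M\mid f(m)=f(e)\}$; by Lemma~\ref{le.se}(1) (or rather its module analogue via the kernel relation $\Ker f$) this holds iff $f$ is injective, giving (1). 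For the third sequence $\xymatrix{N\ar[r]^{g} & P\ar[r] & \star}$, exactness at $P$ forces $\mathrm{Im}\,g=\ker_{e}(P\to\star)=P$, i.e. $g$ is surjective; combined with exactness of the first sequence $\xymatrix{M\ar[r]^f & N\ar[r]^g & P}$, which says $\mathrm{Im}\,f=\ker_e g$ for a suitable $e\in\mathrm{Im}\,g=P$, the first isomorphism theorem for $T$-modules quoted after the induced-submodule definition ($N/\Ker g\cong\mathrm{Im}\,g$) yields $N/\mathrm{Im}\,f=N/\ker_e g=N/\Ker g\cong P$, which is (2). This establishes the forward direction.

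For the converse, assume (1) and (2). From (1), $f$ is injective, so $\mathrm{Im}\,f=f(M)$ is an induced submodule of $N$ and the quotient $T$-module $N/\mathrm{Im}\,f$ is defined together with its canonical $T$-linear projection $\pi$. I would then take $g$ to be, up to the isomorphism in (2), the composite $N\xrightarrow{\pi}N/\mathrm{Im}\,f\cong P$; concretely, fixing $e\in\mathrm{Im}\,f$ one has $\ker_{\pi(e)}\pi=\mathrm{Im}\,f$ by construction of the quotient heap (the class of any $s\in\mathrm{Im}\,f$ is $\mathrm{Im}\,f$ itself, i.e. the distinguished point of $N/\mathrm{Im}\,f$). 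Transporting along the isomorphism of (2), there is $e'=g(e)\in\mathrm{Im}\,g$ with $\ker_{e'}g=\mathrm{Im}\,f$, which is exactly exactness of $\xymatrix{M\ar[r]^f & N\ar[r]^g & P}$. Surjectivity of $g$ (it is a projection followed by an isomorphism) gives exactness of $\xymatrix{N\ar[r]^g & P\ar[r] & \star}$, and injectivity of $f$ gives exactness of $\xymatrix{\star\ar[r] & M^{(e)}\ar[r]^{f} & N^{(f(e))}}$ by the same computation as in the forward direction — noting that passing to the $e$-induced structures is exactly what makes $\star$ (the image of $e$) into a legitimate submodule and $f$ into a morphism of the induced modules.

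The main obstacle, and the step I would be most careful about, is bookkeeping the basepoints $e$ correctly throughout: exactness here is not "kernel = image" for a fixed zero object but "$\ker_e=\mathrm{Im}$" for a suitably chosen $e$, and one must check that the $e$ witnessing exactness of the first sequence is compatible with the $e$ appearing in the induced modules $M^{(e)}$ and $N^{(f(e))}$ of the second sequence. Here I would lean on the remarks already recorded in the excerpt — that different choices of $e$ yield isomorphic kernels and isomorphic induced modules, and that $N/\ker_e\varphi$ is independent of $e$ — so that the choice can be made coherently. The only genuinely computational point is verifying that $f\colon M^{(e)}\to N^{(f(e))}$ is $T$-linear for the induced actions, i.e. $f([f\cdot_e\text{-stuff}])$ matches up; this is a direct substitution using $f(t\cdot m)=t\cdot f(m)$, $f([m,m',m''])=[f(m),f(m'),f(m'')]$ and the definition $t\cdot_e m=[t\cdot m,t\cdot e,e]$, and I would state it without grinding through every bracket.
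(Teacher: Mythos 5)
This lemma is imported verbatim from \cite[Lemma 6.1]{tbp} and the present paper gives no proof of it, so there is no in-paper argument to compare yours against; judged on its own, your proposal is essentially the standard argument and is correct in substance. The forward direction is fine: exactness of the second sequence forces some fibre $\ker_{e''}f$ to be a singleton, hence $f$ is injective --- though the right citation is Corollary \ref{ua} rather than Lemma \ref{le.se}(1), and you should not call the map $\star\to M^{(e)}$ ``unique'' with image $\{e\}$: a $T$-linear map $\star\to M^{(e)}$ picks out an arbitrary absorber of $M^{(e)}$, which is harmless since any singleton fibre suffices for Corollary \ref{ua}. Exactness of the third sequence gives surjectivity of $g$, and then $N/\mathrm{Im}\,f=N/\ker_e g=N/\Ker g\cong \mathrm{Im}\,g=P$ by the quoted first isomorphism theorem, exactly as you say.

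The one point you should make explicit in the converse is your step ``take $g$ to be the composite $N\xrightarrow{\pi}N/\mathrm{Im}\,f\cong P$''. If $f$ and $g$ are both regarded as fixed in advance, as the statement literally reads, then (1) and (2) do \emph{not} imply exactness for that particular $g$: for example $f\colon \mathrm{T}(2\mathbb{Z})\hookrightarrow \mathrm{T}(\mathbb{Z})$, $P=\mathrm{T}(\mathbb{Z}/2\mathbb{Z})$ and $g$ the constant map at the absorber $\bar{0}$ satisfy (1) and (2), yet neither the first nor the third sequence is exact. So the converse only holds under the existential reading in which the connecting map is built from $\pi$ and the isomorphism of (2); that is what you do, and it is the only reading that makes the lemma true, but you are silently replacing the given $g$, so say so. Your remaining verifications are sound: for $n_0\in\mathrm{Im}\,f$ one has $\ker_{\pi(n_0)}\pi=\mathrm{Im}\,f$ because the class of any element of the sub-heap is the sub-heap itself, and $f\colon M^{(e)}\to N^{(f(e))}$ is $T$-linear since $f(t\cdot_e m)=[t\cdot f(m),t\cdot f(e),f(e)]=t\cdot_{f(e)}f(m)$.
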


By abuse of terminology, we will say that
\[
\xymatrix{
\star \ar@{.>}[r] & M \ar@<+0.3ex>@{.>}[r]^{f} \ar@<-0.3ex>[r] & N \ar[r]^g & P \ar[r] & \star
}
\]
is a short exact sequence of $T$-modules to mean that there exists $e\in M$ such that all three sequences in Lemma 2.6 are exact.

\begin{lemma}{\rm \cite[Lemma 4.10]{tb1}}
Let $T$ be a truss, and $M$, $N$ left $T$-modules. Then $M\times N$ is a $T$-module with the product heap and module structures, i.e.\
\begin{enumerate}
\item[{\rm (1)}] with the heap operation defined by
$$
\left[(m_1,n_1), (m_2,n_2), (m_3,n_3)\right] = \left([m_1,m_2,m_3],[n_1,n_2,n_3]\right),
$$
for all $m_1,m_2,m_3\in M$, $n_1,n_2,n_3 \in N$;
\item[{\rm (2)}]for all $x\in T$, $m\in M$ and $n\in N$,
$$
x\cdot (m,n) = (x\cdot m, x\cdot n).
$$
\end{enumerate}
\end{lemma}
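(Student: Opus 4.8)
The plan is to check the two families of identities that define a $T$-module for the triple $(M\times N,[-,-,-],\cdot)$ by reducing each of them, coordinate by coordinate, to the corresponding identity in $M$ and in $N$, each of which is available by hypothesis. In other words, the whole statement is a transport of structure along the two projections, and the proof amounts to unfolding definitions.

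First I would verify that the ternary operation in (1) makes $M\times N$ an abelian heap. Writing $\xi_i=(m_i,n_i)$ with $m_i\in M$, $n_i\in N$, the associativity law $[[\xi_1,\xi_2,\xi_3],\xi_4,\xi_5]=[\xi_1,\xi_2,[\xi_3,\xi_4,\xi_5]]$ unfolds, using only the definition of the product operation, into the identity $[[m_1,m_2,m_3],m_4,m_5]=[m_1,m_2,[m_3,m_4,m_5]]$ in $M$ together with its analogue in $N$; both hold since $M$ and $N$ are heaps. The Mal'cev identities $[\xi_1,\xi_2,\xi_2]=\xi_1=[\xi_2,\xi_2,\xi_1]$ and the commutativity $[\xi_1,\xi_2,\xi_3]=[\xi_3,\xi_2,\xi_1]$ reduce componentwise in exactly the same way, so $M\times N$ is an abelian heap.

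Next I would check that $x\cdot(m,n)=(x\cdot m,x\cdot n)$ defines an associative left action of $T$ that distributes over the heap operation. Associativity $x\cdot(y\cdot(m,n))=(xy)\cdot(m,n)$ becomes the pair $x\cdot(y\cdot m)=(xy)\cdot m$ in $M$ and $x\cdot(y\cdot n)=(xy)\cdot n$ in $N$; distributivity over the heap operation of $T$, namely $[x,y,z]\cdot(m,n)=[x\cdot(m,n),y\cdot(m,n),z\cdot(m,n)]$, and distributivity over the heap operation of $M\times N$, namely $x\cdot[\xi_1,\xi_2,\xi_3]=[x\cdot\xi_1,x\cdot\xi_2,x\cdot\xi_3]$, split likewise into the two corresponding $T$-module axioms for $M$ and for $N$. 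Hence $M\times N$ is a $T$-module, and if $T$ is unital and $M,N$ are unital, the same coordinatewise reasoning yields $1\cdot(m,n)=(m,n)$.

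There is no genuine obstacle here: the argument is a routine coordinatewise verification, and the only care required is bookkeeping — making sure that each heap axiom and each of the three action axioms is explicitly matched with its two coordinate instances. As a byproduct one records that the projections $M\times N\to M$ and $M\times N\to N$ are $T$-linear, which is the feature that makes this product useful in the sequel.
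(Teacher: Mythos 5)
Your proof is correct: the componentwise verification of the heap axioms (associativity, Mal'cev identities, commutativity) and of the three action axioms is exactly the routine argument this lemma requires. Note that the paper itself states this result as a citation of \cite[Lemma 4.10]{tb1} without reproducing a proof, and your argument is the standard one given there, so there is nothing to add.
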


\begin{lemma}\label{aa}{\rm \cite[Lemma 4.13]{tb1}}
The set ${\rm Hom}_{T}(M,N)$ is a heap with the point wise heap operation.
\end{lemma}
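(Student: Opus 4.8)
The plan is to equip $\Hom_T(M,N)$ with the pointwise ternary operation, namely
$[f,g,h](m) := [f(m),g(m),h(m)]$ for all $m\in M$, and then to check the three things this requires: that $[f,g,h]$ is again a $T$-linear map (so that the operation is internal to $\Hom_T(M,N)$), that the operation is heap-associative, and that it satisfies the Mal'cev identities. All of these will ultimately be reduced to the fact, already available, that the codomain $N$ is an abelian heap and that its $T$-action distributes over its heap operation.

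First, and this is the only step with any real content, I would verify that $[f,g,h]\in\Hom_T(M,N)$ whenever $f,g,h\in\Hom_T(M,N)$. For compatibility with the heap operations, take $m_1,m_2,m_3\in M$; applying the definition and the fact that $f$, $g$, $h$ are heap morphisms gives
\begin{equation*}
[f,g,h]([m_1,m_2,m_3]) = \big[[f(m_1),f(m_2),f(m_3)],[g(m_1),g(m_2),g(m_3)],[h(m_1),h(m_2),h(m_3)]\big],
\end{equation*}
and since $N$ is an abelian heap, Lemma \ref{le.se}(4) rewrites the right-hand side as $\big[[f(m_1),g(m_1),h(m_1)],[f(m_2),g(m_2),h(m_2)],[f(m_3),g(m_3),h(m_3)]\big]$, which is exactly $[[f,g,h](m_1),[f,g,h](m_2),[f,g,h](m_3)]$. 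For compatibility with the action, take $t\in T$ and $m\in M$; using $T$-linearity of $f,g,h$ and then distributivity of the $T$-action on $N$ over its heap operation,
\begin{equation*}
[f,g,h](t\cdot m) = [t\cdot f(m),t\cdot g(m),t\cdot h(m)] = t\cdot[f(m),g(m),h(m)] = t\cdot[f,g,h](m).
\end{equation*}

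It then remains to check the heap axioms, and here every identity is evaluated at an arbitrary $m\in M$ and reduces immediately to the corresponding identity in $N$: associativity $[[f,g,h],k,l]=[f,g,[h,k,l]]$ follows from associativity of $[-,-,-]$ in $N$, and the Mal'cev identities $[f,g,g]=f=[g,g,f]$ follow from the Mal'cev identities in $N$; one even gets abelianness $[f,g,h]=[h,g,f]$ for free from abelianness of $N$, which is convenient for later use although not demanded by the statement. The one place where the argument is not a purely formal pointwise transcription is the verification that $[f,g,h]$ respects the heap operation of $M$, where the interchange law of Lemma \ref{le.se}(4) for the abelian heap $N$ is essential; I expect that to be the only genuine point, the rest being bookkeeping.
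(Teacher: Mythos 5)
Your proof is correct, and in fact the paper offers no proof of this lemma at all: it is quoted verbatim from \cite[Lemma 4.13]{tb1}, so there is nothing internal to compare against. Your argument — pointwise definition, closure in ${\rm Hom}_T(M,N)$ via the interchange law of Lemma \ref{le.se}(4) for the abelian heap $N$ together with the distributivity of the $T$-action, and then the heap axioms checked pointwise — is exactly the standard proof of the cited result (and your observation that the heap is in fact abelian is what the paper later records in Remark \ref{qe}).
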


\begin{corollary}{\rm \cite[Corollary 2.13]{tb1}} \label {ua} A heap homomorphism $\varphi$ is injective if and only if there an element of the codomain with a singleton pre-image, if and only if $ker(\varphi)$ is a singleton (trivial) heap.
\end{corollary}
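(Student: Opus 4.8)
The plan is to prove, for a heap homomorphism $\varphi\colon H\to K$, the chain of equivalences: $\varphi$ is injective $\iff$ some $k\in K$ has a singleton pre-image $\iff$ $\Ker\varphi$ is a singleton heap. Throughout, $H$ is understood to be non-empty, as is implicit: if $H=\varnothing$ then ${\rm Im}\,\varphi=\varnothing$ and $\Ker\varphi$ is not even defined. I would organise the argument so that the only implication with real content is the passage from ``a singleton pre-image exists'' to ``$\varphi$ is injective''.

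First the routine directions. If $\varphi$ is injective, pick any $a\in H$; then $\varphi^{-1}(\varphi(a))=\{a\}$ is a singleton, and since this set is by definition $\ker_{\varphi(a)}\varphi=\Ker\varphi$, the kernel is a singleton heap. Conversely, if $\Ker\varphi$ is a singleton, write $\Ker\varphi=\ker_{e_0}\varphi=\varphi^{-1}(e_0)$ for the relevant $e_0\in{\rm Im}\,\varphi$; then $e_0\in K$ has a singleton pre-image. Thus it suffices to show that if some $k\in K$ has $\varphi^{-1}(k)=\{a\}$, then $\varphi$ is injective.

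For this step I would argue directly with the heap axioms. Let $x,y\in H$ with $\varphi(x)=\varphi(y)=:e$. The idea is to slide the pair $(x,y)$ into the known singleton fibre over $k$ by forming $w:=[x,y,a]\in H$. Since $\varphi$ respects the ternary operation and $[e,e,k]=k$ by the Mal'cev identities, one computes $\varphi(w)=[\varphi(x),\varphi(y),\varphi(a)]=[e,e,k]=k$, so $w\in\varphi^{-1}(k)=\{a\}$, i.e.\ $[x,y,a]=a$. Then Lemma~\ref{le.se}(1) (cancellation in a heap) yields $x=y$, so $\varphi$ is injective and the cycle of equivalences is closed.

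The hard part --- indeed the only part carrying any content --- is spotting the element $w=[x,y,a]$: it is engineered precisely so that the Mal'cev identity $[e,e,k]=k$ collapses $\varphi(w)$ onto $k$, which is what makes the singleton hypothesis applicable, after which Lemma~\ref{le.se}(1) does the rest. Everything else is bookkeeping, the one point worth flagging being the tacit and necessary assumption $H\neq\varnothing$, without which $\Ker\varphi$ is meaningless while injectivity holds vacuously.
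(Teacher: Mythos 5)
Your proof is correct: the reduction to the single nontrivial implication, the translation of a pair $x,y$ with $\varphi(x)=\varphi(y)$ into the distinguished fibre via $w=[x,y,a]$ and the Mal'cev identity, followed by cancellation using Lemma~\ref{le.se}(1), is exactly the standard argument, and your remark that $H\neq\varnothing$ must be assumed is a legitimate point. Note that the paper itself gives no proof of this statement (it is quoted verbatim from the cited source \cite{tb1}), so there is nothing internal to compare against; your argument is the expected one.
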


\section{Projective modules over trusses}

In \cite{tbp}, T. Brzezi$\acute{n}$ski, B. Rybolowicz and P. Saracco gave the concept of projective modules over trusses. The Schanuel Lemma on projective modules over trusses is exhibited in this section.

\begin{definition}{\rm \cite[Definition 6.7]{tbp}}
Let $P$ be a $T$-module. We say that $P$ is projective if the functor Hom$_{T}(P,-):$ $T$\textbf{-mod}$\longrightarrow $\textbf{Ah} preserves epimorphisms. That is to say, if for every surjective $T$-linear map $\pi: M\longrightarrow N$ and every $T$-linear map $f: P\longrightarrow N$ there exists a (not necessarily unique) $T$-linear map $\widetilde{f}:P\longrightarrow M$ such that $\pi\circ\widetilde{f}=f$.\\
Diagrammatically,
\[
\xymatrix @R=24pt{
M \ar@{->>}[r]^-{\pi} & N \\
 & P. \ar@{.>}[ul]^-{\tilde{f}} \ar[u]_-{f}
}
\]
\end{definition}

\begin{proposition} Let $P_{1}$, $P_{2}$ be $T$-modules, \emph{Abs}$(P_{1})$, \emph{Abs}$(P_{2})$ not empty and $P=P_{1}\times P_{2}$. If $P$ is a projective $T$-module, then $P_{i}$ ($i=1, 2$) are projective $T$-modules.
\begin{proof} Let $e_{1}$ $\in$ Abs$(P_{1})$, $e_{2}$ $\in$ Abs$(P_{2})$.\\
Define
\begin{equation*}
\begin{aligned}
\varepsilon_{1}:P_{1}&\longrightarrow P=P_{1}\times P_{2}\\
      p_{1}&\longmapsto (p_{1}, e_{2})\\
\end{aligned}
\end{equation*}

\begin{equation*}
\begin{aligned}
\varepsilon_{2}:P_{2}&\longrightarrow P=P_{1}\times P_{2}\\
      p_{2}&\longmapsto (e_{1}, p_{2})\\
\end{aligned}
\end{equation*}

\begin{equation*}
\begin{aligned}
\pi_{1}:P=P_{1}\times P_{2}&\longrightarrow P_{1}\\
     (p_{1}, p_{2})&\longmapsto p_{1}\\
\end{aligned}
\end{equation*}

\begin{equation*}
\begin{aligned}
\pi_{2}:P=P_{1}\times P_{2}&\longrightarrow P_{2}\\
     (p_{1}, p_{2})&\longmapsto p_{2},\\
\end{aligned}
\end{equation*}
where $p_{1}\in P_{1}$, $p_{2}\in P_{2}$. It is easy to verify that~$\varepsilon_{i}$, $\pi_{i}$ ($i=1, 2$)~are well-defined.\\
\indent~Since for all $x_{1}$, $x_{2}$, $x_{3}$ $\in P_{1}$,
\begin{equation*}
\begin{aligned}
\varepsilon_{1}([x_{1}, x_{2}, x_{3}])&=([x_{1}, x_{2}, x_{3}], e_{2})= ([x_{1}, x_{2}, x_{3}], [e_{2}, e_{2}, e_{2}])\\
&=[(x_{1}, e_{2}), (x_{2}, e_{2}), (x_{3}, e_{2})]\\
&=[\varepsilon_{1}(x_{1}), \varepsilon_{1}(x_{2}), \varepsilon_{1}(x_{3})]\\
\end{aligned}
\end{equation*}
and for all $t\in T$, $x_{4}\in P_{1}$,
\begin{equation*}
\begin{aligned}
\varepsilon_{1}(tx_{4}) = (tx_{4}, e_{2}) = (tx_{4}, te_{2}) = t(x_{4}, e_{2}) = t\varepsilon_{1}(x_{4}).
\end{aligned}
\end{equation*}
So~$\varepsilon_{1}$~is a homomorphism of $T$-modules. Analogously, $\varepsilon_{2}$ and $\pi_{i}$~$(i=1, 2)$ are homomorphisms of $T$-modules. Obviously, $\varepsilon_{i}$ ($i=1, 2$) are monic and $\pi_{i}$ ($i=1, 2$)~are epic.\\
\indent Assume that $M$ and $N$ are $T$-modules, $f$ : $M\longrightarrow N$ is a epimorphism of $T$-modules, and $g_{i}$ : $P_{i}\longrightarrow N$ ($i=1, 2$) are homomorphisms of $T$-modules. Since $P$ is a projective $T$-module, there exists a morphism of $T$-modules $h$: $P=P_{1}\times P_{2}\longrightarrow M$ such that the following diagram commutes
$$
\xymatrix @R=24pt{
&
      P=P_{1}\times P_{2} \ar@<+0,8ex>[d]^{\pi_{i}} \ar@{-->}[ddl]_{h} \\&
 P_{i} \ar[u]^{\varepsilon_{i}} \ar[d]^{g_{i}} \ar@{-->}[dl]_{\hat{h}} \\
M \ar[r]^f & N.\\
}$$
That is, $fh = g_{i}\pi_{i}$ ($i=1, 2$). Let $\hat{h}$ $= h\varepsilon_{i}$ : $P_{i}\longrightarrow M$, then
$f\hat{h} = fh\varepsilon_{i} = g_{i}\pi_{i}\varepsilon_{i} = g_{i}~(i=1, 2)$. So $P_{i}$ ($i=1, 2$) are projective $T$-modules.\\
\end{proof}
\end{proposition}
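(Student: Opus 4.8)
The plan is to adapt the classical argument that a direct summand of a projective object is projective, with the one twist that the role of the zero element (available for modules over a ring) must here be played by a chosen absorber.

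First I would fix absorbers $e_1\in\mathrm{Abs}(P_1)$ and $e_2\in\mathrm{Abs}(P_2)$ and introduce the coordinate maps: the inclusions $\varepsilon_1\colon P_1\to P$, $p_1\mapsto(p_1,e_2)$ and $\varepsilon_2\colon P_2\to P$, $p_2\mapsto(e_1,p_2)$, together with the projections $\pi_i\colon P\to P_i$. The key verification is that these four maps are $T$-linear. For $\pi_i$ this is immediate from the description of the product heap-and-action on $P=P_1\times P_2$ recalled above. For $\varepsilon_1$ the point is precisely that $e_2$ is an absorber: because $[e_2,e_2,e_2]=e_2$ one gets $\varepsilon_1([x,y,z])=([x,y,z],e_2)=([x,y,z],[e_2,e_2,e_2])=[(x,e_2),(y,e_2),(z,e_2)]$, so $\varepsilon_1$ respects the heap operation; and because $t\cdot e_2=e_2$ for every $t\in T$ one gets $\varepsilon_1(t\cdot p)=(t\cdot p,e_2)=(t\cdot p,t\cdot e_2)=t\cdot(p,e_2)$, so $\varepsilon_1$ respects the action. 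The map $\varepsilon_2$ is handled symmetrically. This is exactly where the hypotheses $\mathrm{Abs}(P_i)\neq\varnothing$ are used: without an absorber in $P_2$ there is no canonical $T$-linear way to push $P_1$ into the product. It is then clear that $\pi_i\circ\varepsilon_i=\mathrm{id}_{P_i}$.

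Second, I would run the lifting argument. Let $f\colon M\to N$ be a surjective $T$-linear map (equivalently, by the earlier proposition, an epimorphism) and let $g_i\colon P_i\to N$ be $T$-linear. Then $g_i\circ\pi_i\colon P\to N$ is $T$-linear, so projectivity of $P$ furnishes a $T$-linear map $h\colon P\to M$ with $f\circ h=g_i\circ\pi_i$. Setting $\widetilde{g_i}:=h\circ\varepsilon_i\colon P_i\to M$, which is $T$-linear as a composite of $T$-linear maps, we obtain $f\circ\widetilde{g_i}=f\circ h\circ\varepsilon_i=g_i\circ\pi_i\circ\varepsilon_i=g_i$. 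Hence $P_i$ is projective for $i=1,2$.

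I do not expect a real obstacle; the argument is formal once the structure maps are in place. The only step that genuinely uses the specific hypotheses of the statement — rather than being formal nonsense that holds in any category with a zero object — is the $T$-linearity of $\varepsilon_i$, and that reduces to the single observation that an absorber is fixed both by the ternary heap operation and by the $T$-action. It is also worth recording, so as to see that the statement is not vacuous, that $(e_1,e_2)$ is an absorber of $P$, so $\mathrm{Abs}(P)\neq\varnothing$ whenever $\mathrm{Abs}(P_1)$ and $\mathrm{Abs}(P_2)$ are; but this observation is not needed for the proof itself.
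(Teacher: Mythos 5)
Your proposal is correct and follows essentially the same route as the paper: the same absorber-based embeddings $\varepsilon_i$ and projections $\pi_i$ with $\pi_i\circ\varepsilon_i=\mathrm{id}_{P_i}$, the same lifting of $g_i\circ\pi_i$ through the projectivity of $P$, and the same definition of the lift as $h\circ\varepsilon_i$. Your explicit remark that the $T$-linearity of $\varepsilon_i$ is exactly where the hypothesis $\mathrm{Abs}(P_i)\neq\varnothing$ enters is a nice touch, but it does not change the argument.
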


\begin{proposition}{\rm \cite[Proposition 6.5]{tbp}}\label{ae} Let $\phi$ : $M\longrightarrow N$ and $\psi$ : $N\longrightarrow P$ be morphisms of $T$-modules. Assume that $\phi$ is injective, that $\psi$ admits a section $\delta$ (in particular, it is surjective) and that $\xymatrix{
M \ar[r]^\phi & N \ar[r]^\psi & P
}$ is exact. Then there exists $e^{\prime}\in M$ yielding an isomorphism of $T$-modules $N\cong M^{(e^{\prime})}\times P$, where $M^{(e^{\prime})}$ denotes the $e^{\prime}$-induced left $T$-module structure on $M$.
\end{proposition}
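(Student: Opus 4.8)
The plan is to construct the isomorphism explicitly, adapting the classical splitting argument to the pointed setting of $T$-modules. Let $e\in P=\Im\psi$ be the element witnessing exactness of $\xymatrix{M\ar[r]^\phi & N\ar[r]^\psi & P}$, so that $\Im\phi=\ker_e\psi$ as subsets of $N$. Since $\psi(\delta(e))=e$, the element $\delta(e)$ lies in $\ker_e\psi=\Im\phi$, so by injectivity of $\phi$ there is a unique $e'\in M$ with $\phi(e')=\delta(e)$; this $e'$ is the required basepoint.

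Next I would define $\Theta\colon N\longrightarrow M^{(e')}\times P$ by
\[\Theta(n)=\bigl(\phi^{-1}([n,\delta\psi(n),\delta(e)]),\ \psi(n)\bigr).\]
This is well defined because $\psi([n,\delta\psi(n),\delta(e)])=[\psi(n),\psi(n),e]=e$ by the Mal'cev identities, so $[n,\delta\psi(n),\delta(e)]\in\ker_e\psi=\Im\phi$. That $\Theta$ is a heap morphism uses Lemma~\ref{le.se}(4): the assignment $n\mapsto[n,\delta\psi(n),\delta(e)]$ is a heap morphism because $\delta\psi$ is one and $[\delta(e),\delta(e),\delta(e)]=\delta(e)$, and $\Theta$ is then obtained by composing with the heap isomorphism $\phi^{-1}\colon\Im\phi\to M$ and pairing with $\psi$. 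For $T$-linearity, recall that $M^{(e')}$ carries the action $t\cdot_{e'}m=[t\cdot m,t\cdot e',e']$; one verifies $\Theta(t\cdot n)=t\cdot\Theta(n)$ coordinatewise. The $P$-coordinate is immediate from $T$-linearity of $\psi$. For the $M$-coordinate, apply the injective map $\phi$ to both sides and use that $\phi$ and $\psi$ are $T$-linear, that $\delta$ is a $T$-linear section (so $\delta\psi(t\cdot n)=t\cdot\delta\psi(n)$), that $\phi(e')=\delta(e)$, and the heap identity $[[a,b,c],c,d]=[a,b,[c,c,d]]=[a,b,d]$: both sides then reduce to $[t\cdot n,t\cdot\delta\psi(n),\delta(e)]$.

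Finally I would write down the candidate inverse $\Xi\colon M^{(e')}\times P\longrightarrow N$, $\Xi(m,p)=[\phi(m),\delta(e),\delta(p)]$, and check $\Xi\circ\Theta=\Id_N$ and $\Theta\circ\Xi=\Id$. Both identities reduce, after substituting the definitions, to repeated use of $[[a,b,c],c,b]=[a,b,[c,c,b]]=a$ (for $\Xi\circ\Theta$ and for the $M$-coordinate of $\Theta\circ\Xi$), together with $\psi(\phi(m))=e$ and $\psi\delta=\Id_P$ (which give $\psi(\Xi(m,p))=[e,e,p]=p$). Since $\Theta$ is then a bijective $T$-linear map, $\Xi=\Theta^{-1}$ is automatically $T$-linear, and $\Theta$ is the desired isomorphism $N\cong M^{(e')}\times P$ of $T$-modules.

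The step I expect to be the main obstacle is the $T$-linearity of $\Theta$: one must keep careful track of which structure ($M$ versus $M^{(e')}$, $N$ versus $N^{(\delta(e))}$) is in force at each stage --- in particular that $\phi$ induces a $T$-linear map $M^{(e')}\to N^{(\delta(e))}$ --- and read the hypothesis on $\delta$ as a \emph{$T$-linear} section rather than a merely heap-theoretic one. Once the bookkeeping is set up, every remaining computation collapses via the Mal'cev axioms and Lemma~\ref{le.se}.
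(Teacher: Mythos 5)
Your construction is correct: the basepoint $e'$ with $\phi(e')=\delta(e)$, the map $\Theta(n)=\bigl(\phi^{-1}([n,\delta\psi(n),\delta(e)]),\psi(n)\bigr)$ and its inverse $\Xi(m,p)=[\phi(m),\delta(e),\delta(p)]$ all check out via Lemma~\ref{le.se} and the $T$-linearity of the section $\delta$, which is indeed how the hypothesis must be read. Note that this paper does not prove Proposition~\ref{ae} at all but imports it from \cite[Proposition~6.5]{tbp}; your explicit splitting argument is essentially the standard proof given in that source, so there is nothing to correct.
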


\begin{theorem}(The Schanuel Lemma on Projective $T$-modules) Let $T$ be a truss. Suppose that the following two sequences of $T$-modules are exact. \\
$$
\xymatrix @R=24pt{
\star \ar@{.>}[r] & K \ar@<+0.3ex>@{.>}[r]^{i} \ar@<-0.3ex>[r] & P \ar[r]^\pi & M \ar[r] & \star}
$$
$$
\xymatrix @R=24pt{
\star \ar@{.>}[r] & K^{\prime} \ar@<+0.3ex>@{.>}[r]^{i^{\prime}} \ar@<-0.3ex>[r] & P^{\prime} \ar[r]^{\pi^{\prime}} & M \ar[r] & \star}
.$$\\
 Namely, there exist $m\in M$, $m^{\prime}\in M$ such that \emph{Im}$i$=\emph{ker}$_{m}\pi$, \emph{Im}$i^{\prime}$=\emph{ker}$_{m^{\prime}}\pi^{\prime}$. Moreover,~\emph{Abs}$(K^{\prime})$ is not empty, and $P$, $P^{\prime}$ are projective $T$-modules. If $m = m^{\prime}$, then there exists $e\in K$ yielding an isomorphism of $T$-modules $K^{(e)}\times P^{\prime}\cong K^{\prime}\times P$, where  $K^{(e)}$ denotes the $e$-induced $T$-module structure on $K$.
\begin{proof} Considering the following commutative diagram:\\
$$
\xymatrix @R=24pt{
\star \ar@{.>}[r] & K \ar@<+0.3ex>@{.>}[r]^{i} \ar@<-0.3ex>[r] \ar@{-->}[d]^{\alpha}& P \ar[r]^{\pi} \ar@{-->}[d]^{\beta} & M \ar@{>}[r]^{} \ar[d]^{1_{M}}\ar@{>}[r]^{} & \star & \\
\star \ar@{.>}[r] & K^{\prime} \ar@<+0.3ex>@ {.>}[r]^{i^{\prime}} \ar@<-0.3ex>[r] & P^{\prime} \ar[r]^{\pi^{\prime}} & M \ar@{>}[r]^{} & \star}
$$\\
\indent Since $P$ is a projective $T$-module, there exists a morphism of $T$-modules $\beta$: $P\longrightarrow P^{\prime}$ such that $\pi^{\prime}\beta = \pi$. By diagram chasing, there exists a morphism of $T$-modules $\alpha$: $K\longrightarrow K^{\prime}$ such that $i^{\prime}\alpha = \beta i$.\\
Define
\begin{equation*}
\begin{aligned}
\theta:K&\longrightarrow P\times K^{\prime}\\
       k&\longmapsto (i(k), \alpha(k))\\
\end{aligned}
\end{equation*}
and
\begin{equation*}
\begin{aligned}
\psi:P\times K^{\prime}&\longrightarrow P^{\prime}\\
       (p,k^{\prime})&\longmapsto [\beta(p), i^{\prime}(k^{\prime}), i^{\prime}(e^{\prime})],\\
\end{aligned}
\end{equation*}
where $k\in K$, $p\in P$, $k^{\prime}\in K^{\prime}$ and $e\in$ Abs$(K^{\prime})$. It is easy to verify that $\theta$ and $\psi$ are well-defined. \\
\indent First of all, since\\
\begin{equation*}
\begin{aligned}
\theta([k_{1}, k_{2}, k_{3}])&=(i[k_{1}, k_{2}, k_{3}], \alpha[k_{1}, k_{2}, k_{3}])\\
&=([i(k_{1}), i(k_{2}), i(k_{3})], [\alpha(k_{1}), \alpha(k_{2}), \alpha(k_{3})])\\
&=[(i(k_{1}), \alpha(k_{1})), (i(k_{2}), \alpha(k_{2})), (i(k_{3}), \alpha(k_{3}))]\\
&=[\theta(k_{1}), \theta(k_{2}), \theta(k_{3})],
\end{aligned}
\end{equation*}
\begin{equation*}
\begin{aligned}
\theta(tk_{4})&=(i(tk_{4}), \alpha(tk_{4}))=(ti(k_{4}), t\alpha(k_{4}))=t(i(k_{4}), \alpha(k_{4}))=t\theta((k_{4})
\end{aligned}
\end{equation*}
for all $k_{1}, k_{2}, k_{3}, k_{4}\in K, t\in T$. And for all $(p_{1}, k_{1}^\prime), (p_{2}, k_{2}^\prime), (p_{3}, k_{3}^\prime), (p_{4}, k_{4}^\prime)\in P\times K^{\prime}, t\in T$,
\begin{equation*}
\begin{aligned}
\psi([(p_{1}, k_{1}^\prime), (p_{2}, k_{2}^\prime), (p_{3}, k_{3}^\prime)])&=\psi([p_{1}, p_{2}, p_{3}], [k_{1}^\prime, k_{2}^\prime, k_{3}^\prime])\\
&=[\beta[p_{1}, p_{2}, p_{3}], i^\prime[k_{1}^\prime, k_{2}^\prime, k_{3}^\prime], i^\prime(e^\prime)]\\
&=[\beta[p_{1}, p_{2}, p_{3}], i^\prime[k_{1}^\prime, k_{2}^\prime, k_{3}^\prime], [i^\prime(e^\prime), i^\prime(e^\prime), i^\prime(e^\prime)]]\\
&=[[\beta(p_{1}), \beta(p_{2}), \beta(p_{3})], [i^\prime(k_{1}^\prime), i^\prime(k_{2}^\prime), i^\prime(k_{3}^\prime)], [i^\prime(e^\prime), i^\prime(e^\prime), i^\prime(e^\prime)]]\\
\overset{ Lemma~\ref{le.se}}{=}&[[\beta(p_{1}), i^\prime(k_{1}^\prime), i^\prime(e^\prime)], [\beta(p_{2}), i^\prime(k_{2}^\prime), i^\prime(e^\prime)], [\beta(p_{3}), i^\prime(k_{3}^\prime), i^\prime(e^\prime)]]\\
&=[\psi(p_{1}, k_{1}^\prime), \psi(p_{2}, k_{2}^\prime), \psi(p_{3}, k_{3}^\prime)],
\end{aligned}
\end{equation*}
\begin{equation*}
\begin{aligned}
\psi(t(p_{4}, k_{4}^\prime))&=\psi(tp_{4}, tk_{4}^\prime)\\
&=[\beta(tp_{4}), i^\prime(tk_{4}^\prime), i^\prime(e^\prime)]\\
&= [\beta(tp_{4}), i^\prime(tk_{4}^\prime), i^\prime(te^\prime)]\\
&=[t\beta(p_{4}), ti^\prime(k_{4}^\prime), ti^\prime(e^\prime)]\\
&=t[\beta(p_{4}), i^\prime(k_{4}^\prime), i^\prime(e^\prime)]\\
&=t\psi(p_{4}, k_{4}^\prime).\\
\end{aligned}
\end{equation*}
Then $\theta$ and $\psi$ are homomorphisms of $T$-modules.\\
\indent Secondly, to prove that $\theta$ is monic. Assume that $x\in$ ker$_{\theta(k)}\theta$, obviously, $k\in$ ker$_{\theta(k)}\theta$, then $\theta(x) = \theta(k)$. Hence $(i(x), \alpha(x)) = (i(k), \alpha(k))$, this implies that $i(x) = i(k)$. Since $i$ is monic, $x = k$. Thus, ker$_{\theta(k)}\theta$ is a singleton. By Corollary \ref{ua}, $\theta$ is monic.\\
\indent Thirdly, it suffices to show that $\psi$ is epic. If $p^\prime\in P$, then $\pi^{\prime}(p^{\prime}) = m$, where $m\in M$. Since $\pi$ is epic, there exists $p\in P$ such that $\pi(p) = m$. Thus, $\pi^\prime(p^\prime) = \pi(p)$, this means that $\pi^\prime(p^\prime)=\pi(p)\overset{\pi^{\prime}\beta=\pi}{=} \pi^\prime\beta(p)$. So
\begin{equation*}
\begin{aligned}
\pi^\prime([\beta(p), p^\prime, i^\prime(e^\prime)])=[\pi^\prime\beta(p), \pi^\prime (p^{\prime}), \pi^\prime i^\prime(e^\prime)]
\overset{Lemma~\ref{le.se}}{=}&\pi^\prime i^\prime(e^\prime)=m^\prime,
\end{aligned}
\end{equation*}
then $[\beta(p), p^\prime, i^\prime(e^\prime)]\in$ ker$_{m^\prime}\pi^\prime$ = Im$i^\prime$. Since $i^\prime$ is monic, there exists a unique $k^\prime\in K^\prime$ such that $i^\prime(k^\prime)=[\beta(p), p^\prime, i^\prime(e^\prime)]$, and hence by Lemma~\ref{le.se}, $p^\prime=[ i^\prime(e^\prime), i^\prime(k^\prime), \beta(p)] = [\beta(p), i^\prime(k^\prime), i^\prime(e^\prime)]$, then $\psi(p, k^\prime) = [\beta(p), i^\prime(k^\prime), i^\prime(e^\prime)] = p^\prime$. Thus, proving that $\psi$ is epic.\\
\indent Fourthly, it suffices to show that
\[\xymatrix @R=24pt{
\star \ar@{.>}[r] & K \ar@<+0.3ex>@{.>}[r]^-{\theta} \ar@<-0.3ex>[r] & P\times K^\prime \ar[r]^-\psi & P^\prime \ar[r] & \star}
\]
is exact. Since
$$\psi\theta(k) = \psi(i(k), \alpha(k))\overset{\beta i = i^\prime\alpha}{=}[\beta i(k), i^\prime\alpha(k), i^\prime(e^\prime)]\overset{ Lemma~\ref{le.se}}{=} i^\prime(e^\prime)$$ for any $k\in K$, this means that Im$\theta\subseteq$ ker$_{i^\prime(e^\prime)}\psi$. Let $(p, k^\prime)\in$ ker$_{i^\prime(e^\prime)}\psi$. So $\psi(p, k^\prime) = [\beta(p), i^\prime(k^\prime), i^\prime(e^\prime)]$= $i^\prime(e^\prime)$, then by Lemma~\ref{le.se}, $\beta(p)=i^\prime(k^\prime)$. Since $\pi^\prime\beta(p)\overset{\pi^\prime\beta=\pi}{=}\pi(p) = \pi^\prime i^\prime(k^\prime)=m^\prime=m$, and hence $\pi(p)=m$, this implies that $p\in$ ker$_{m}\pi$ = Im$i$. Since $i$ is monic, there exists a unique $k\in K$ such that $i(k) = p$.\\
\indent~Additional, $i^\prime\alpha(k)\overset{i^\prime\alpha = \beta i}{=} \beta i(k) = \beta(p) = i^\prime(k^\prime)$. Since $i^\prime$ is monic, $\alpha(k) = k^\prime$. Thus, $\theta(k) = (i(k), \alpha(k)) = (p, k^\prime)$, this means that Im$\theta\supseteq$ ker$_{i^\prime(e^\prime)}\psi$. This shows that Im$\theta$ = ker$_{i^\prime(e^\prime)}\psi$. Thus, proving that the sequence is exact.\\
\indent Finally, to prove that $\xymatrix @R=24pt{
\star \ar@{.>}[r] & K \ar@<+0.3ex>@{.>}[r]^-{\theta} \ar@<-0.3ex>[r] & P\times K^\prime \ar[r]^-\psi & P^\prime \ar[r] & \star}$~splits. Since $P^\prime$ is a projective $T$-module, there exists a morphism of $T$-modules $\gamma$: $P^\prime\longrightarrow P\times K^\prime$ such that $\psi\gamma= 1_{P^\prime}$.
$$
\xymatrix @R=24pt{
\star \ar@{.>}[r] & K \ar@<+0.3ex>@{.>}[r]^-{\theta} \ar@<-0.3ex>[r] & P\times K^\prime \ar[r]^-{\psi} & P^\prime \ar@/^3ex/[l]^-{\gamma} \ar[r] & \star
}
$$
Thus,~$\xymatrix @R=24pt{
\star \ar@{.>}[r] & K \ar@<+0.3ex>@{.>}[r]^-{\theta} \ar@<-0.3ex>[r] & P\times K^\prime \ar[r]^-\psi & P^\prime \ar[r] & \star}$~splits. Then by Proposition \ref{ae}, there exists $e\in K$ yielding an isomorphism of $T$-modules $K^{(e)}\times P^{\prime}\cong K^{\prime}\times P$, where  $K^{(e)}$ denotes the $e$-induced $T$-module structure on $K$.
\end{proof}
\end{theorem}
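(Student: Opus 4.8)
The plan is to run the classical Schanuel argument, replacing the fibre product $P\times_M P'$ by the ordinary product $P\times K'$ and using the absorber in $K'$ together with the hypothesis $m=m'$ to turn section-less data into genuine $T$-linear maps; Proposition \ref{ae} then turns the resulting split short exact sequence into the desired product decomposition. \emph{Step 1 (comparison maps).} Since $P$ is projective and $\pi'$ is surjective, lift $\pi$ through $\pi'$ to get a $T$-linear $\beta\colon P\to P'$ with $\pi'\beta=\pi$. For $k\in K$ we have $\pi'\beta i(k)=\pi i(k)=m=m'$, so $\beta i(k)\in\ker_{m'}\pi'=\mathrm{Im}\,i'$; since $i'$ is injective there is a unique $\alpha(k)\in K'$ with $i'\alpha(k)=\beta i(k)$, and one checks readily that $\alpha\colon K\to K'$ is $T$-linear. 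This is the one place where the equality $m=m'$ is essential.

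\emph{Step 2 (an auxiliary short exact sequence).} Fix $e'\in\mathrm{Abs}(K')$ and set $\theta\colon K\to P\times K'$, $\theta(k)=(i(k),\alpha(k))$, and $\psi\colon P\times K'\to P'$, $\psi(p,k')=[\beta(p),i'(k'),i'(e')]$. Then $\theta$ is $T$-linear because $i,\alpha$ are and the product heap and action are componentwise; $\psi$ is $T$-linear because its heap compatibility follows from the interchange identity of Lemma \ref{le.se}(4), while its action compatibility uses $t\cdot e'=e'$, which is precisely the absorber condition. I would then verify, in order: (i) $\theta$ is monic --- the composite of $\theta$ with the projection $P\times K'\to P$ is $i$, which is monic, so $\ker\theta$ is a singleton and Corollary \ref{ua} applies; (ii) $\psi$ is epic --- given $p'\in P'$, pick $p\in P$ with $\pi(p)=\pi'(p')$ (possible since $\pi$ is surjective), note $\pi'\beta(p)=\pi(p)=\pi'(p')$, so that $[\beta(p),p',i'(e')]\in\ker_{m'}\pi'=\mathrm{Im}\,i'$, write it as $i'(k')$, and use Lemma \ref{le.se}(3) to recover $p'=[\beta(p),i'(k'),i'(e')]=\psi(p,k')$; (iii) exactness at $P\times K'$ --- $\psi\theta(k)=[\beta i(k),i'\alpha(k),i'(e')]=i'(e')$ for every $k$ (using $\beta i=i'\alpha$ and the Mal'cev identity), so $\mathrm{Im}\,\theta\subseteq\ker_{i'(e')}\psi$; conversely, if $[\beta(p),i'(k'),i'(e')]=i'(e')$ then $\beta(p)=i'(k')$ by Lemma \ref{le.se}(1), hence $\pi(p)=\pi'\beta(p)=\pi' i'(k')=m'=m$, so $p\in\mathrm{Im}\,i$, say $p=i(k)$, and then $i'\alpha(k)=\beta i(k)=i'(k')$ forces $\alpha(k)=k'$, giving $(p,k')=\theta(k)$. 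Hence $\star\to K\xrightarrow{\theta}P\times K'\xrightarrow{\psi}P'\to\star$ is short exact in the sense of the paragraph following Lemma \ref{lem.abs}.

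\emph{Step 3 (split and conclude).} Because $P'$ is projective, $\psi$ admits a section $\gamma$ (lift $1_{P'}$ through $\psi$), so the auxiliary sequence splits. Applying Proposition \ref{ae} to $\theta$ (injective), $\psi$ (split epimorphism) and the exact sequence of Step 2 produces an element $e\in K$ and a $T$-module isomorphism $P\times K'\cong K^{(e)}\times P'$; composing with the evident swap isomorphism $P\times K'\cong K'\times P$ yields $K^{(e)}\times P'\cong K'\times P$, as required. The componentwise $T$-linearity checks are routine; the two steps that need care are the $T$-linearity of $\psi$ (which rests on both the abelian-heap interchange identity and the absorber $e'$) and the reverse inclusion in the exactness verification of Step 2 (the diagram chase fusing injectivity of $i$ and $i'$, the relation $i'\alpha=\beta i$, and $m=m'$). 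I expect the main obstacle to be the bookkeeping: tracking at each stage which element plays the role of the distinguished point of each kernel, so that Lemma \ref{lem.abs} and Proposition \ref{ae} apply on the nose.
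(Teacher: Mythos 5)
Your proposal is correct and follows essentially the same route as the paper: the same comparison maps $\beta$ and $\alpha$, the same $\theta(k)=(i(k),\alpha(k))$ and $\psi(p,k')=[\beta(p),i'(k'),i'(e')]$, the same verification of monicity, epicity and middle exactness, and the same appeal to Proposition \ref{ae} after splitting $\psi$ by projectivity of $P'$. If anything, you are slightly more explicit than the paper at two points it glosses over --- the construction of $\alpha$ via $\pi i(k)=m=m'$ in place of ``diagram chasing,'' and the final swap $P\times K'\cong K'\times P$ needed to match the stated conclusion.
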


\section{Injective modules over trusses}

As the dual notion of projective modules over trusses, the concept of injective modules over trusses is introduced and its properties are discussed in detail. The Schanuel Lemma on injective modules over trusses is posed in this section.

\begin{definition}
Let $E$ be a $T$-module. If for every monomorphism of $T$-modules $i$ : $M\longrightarrow N$ and every homomorphism of $T$-modules $f$ : $M\longrightarrow E$, there exists a morphism of $T$-modules $g$ : $N\longrightarrow E$ such that $f=gi$, that is to say, the following diagram is commutative:
$$\xymatrix @R=24pt{M \ar[r]^-{i} \ar[d]_-f& N \ar@{-->}[dl]^-{{g}}
\\
 E,}$$
then we say $E$ is an injective $T$-module.
\end{definition}

\begin{example}  The singleton $T$-module is injective.
\begin{proof} Assume that $M$ and $N$ are $T$-modules, $i$ : $M\longrightarrow N$ is a monomorphism of $T$-modules, $\alpha$ : $M\longrightarrow\star$ is a morphism of $T$-modules. Let $g$ : $N\longrightarrow \star$ be a morphism of $T$-modules such that $g(n) = \ast$ for every $n\in N$. It is easy to see that $gi = \alpha$. Namely, the following diagram is commutative:
$$\xymatrix @R=24pt{M \ar[r]^-{i} \ar[d]_\alpha & N \ar@{-->}[dl]^-{{g}}
\\
 \star.}
$$
So the singleton $T$-module is injective.
\end{proof}
\end{example}

\begin{proposition}{\rm \cite[Proposition 6.2]{tbp}}\label{we} Let $\phi$ : $M\longrightarrow N$ and $\psi$ : $N\longrightarrow P$ be morphisms of $T$-modules. Assume that $\psi$ is surjective, that $\phi$ admits a retraction $\gamma$ (in particular, it is injective) and that $\xymatrix{
M \ar[r]^\phi & N \ar[r]^\psi & P
}$ is exact. Then $N\cong M\times P$ as $T$-modules. We will call such a sequence a split exact sequence.
\end{proposition}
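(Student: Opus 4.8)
The plan is to write down the evident candidate isomorphism $\Theta\colon N\to M\times P$, $\Theta(n)=(\gamma(n),\psi(n))$, where $\gamma\colon N\to M$ is the given retraction of $\phi$ (so $\gamma\phi=\mathrm{id}_M$), and to prove it is a bijective morphism of $T$-modules. At the outset I would unpack the exactness hypothesis as: there is $e\in\mathrm{Im}\,\psi$ with $\mathrm{Im}\,\phi=\ker_e\psi$, so that $\psi(\phi(m))=e$ for all $m\in M$. That $\Theta$ is $T$-linear is immediate, since the heap operation and action on $M\times P$ are componentwise and $\gamma,\psi$ are $T$-linear; and since a bijective morphism of $T$-modules is automatically an isomorphism (the usual one-line transport of the heap operation and the action through $\Theta$), it will then follow that $N\cong M\times P$.

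For surjectivity I would argue as follows. Given $(m,p)\in M\times P$, surjectivity of $\psi$ yields $n_0\in N$ with $\psi(n_0)=p$; the point is then to adjust $n_0$ in the first coordinate without disturbing the second. Setting $n:=[\,n_0,\phi\gamma(n_0),\phi(m)\,]$, applying $\gamma$ and using $\gamma\phi=\mathrm{id}_M$ together with the Mal'cev identity $[b,b,a]=a$ gives $\gamma(n)=[\gamma(n_0),\gamma(n_0),m]=m$, while applying $\psi$ and using $\phi\gamma(n_0),\phi(m)\in\ker_e\psi$ together with $[a,b,b]=a$ gives $\psi(n)=[\psi(n_0),e,e]=p$; hence $\Theta(n)=(m,p)$.

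For injectivity, by Corollary \ref{ua} it suffices to check that one fibre of $\Theta$ is a singleton, so I would assume $\Theta(n_1)=\Theta(n_2)$, i.e. $\gamma(n_1)=\gamma(n_2)=:m$ and $\psi(n_1)=\psi(n_2)$, and deduce $n_1=n_2$. The idea is to look at $a:=[\,n_1,n_2,\phi(m)\,]$: one computes $\psi(a)=[\psi(n_1),\psi(n_2),\psi\phi(m)]=[\psi(n_1),\psi(n_1),e]=e$, so $a\in\ker_e\psi=\mathrm{Im}\,\phi$ and $a=\phi(m')$ for a unique $m'$ (as $\phi$ is injective); applying $\gamma$ gives $m'=\gamma(a)=[\gamma(n_1),\gamma(n_2),m]=[m,m,m]=m$, whence $[\,n_1,n_2,\phi(m)\,]=\phi(m)$, and Lemma \ref{le.se}(1) (with $\phi(m)$ in the role of the distinguished element) forces $n_1=n_2$.

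None of these steps is hard; the part that needs care is the one familiar from heap theory: with no zero element available, every ``difference'' has to be formed relative to a chosen base point (here $\phi\gamma(n_0)$ in the surjectivity step and $\phi(m)$ in the injectivity step), and one must keep track of which inclusion of the exactness hypothesis is being used --- $\mathrm{Im}\,\phi\subseteq\ker_e\psi$ for surjectivity, $\ker_e\psi\subseteq\mathrm{Im}\,\phi$ for injectivity --- and invoke the right clause of Lemma \ref{le.se} and of Corollary \ref{ua}. This statement is dual to Proposition \ref{ae}, and one could instead first manufacture a section of $\psi$ and quote that result, but the direct verification seems the cleanest route; I do not anticipate a genuine obstacle beyond this bookkeeping.
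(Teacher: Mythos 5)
Your proof is correct: the map $\Theta(n)=(\gamma(n),\psi(n))$ is $T$-linear, your base-point bookkeeping with $\phi\gamma(n_0)$ and $\phi(m)$ uses exactly the two inclusions $\mathrm{Im}\,\phi\subseteq\ker_e\psi$ and $\ker_e\psi\subseteq\mathrm{Im}\,\phi$ where they are needed, and the appeal to Lemma \ref{le.se}(1) closes injectivity. The paper itself gives no proof of this statement (it is quoted from \cite[Proposition 6.2]{tbp}), and your argument is essentially the standard one used there, so there is nothing further to reconcile.
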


\begin{proposition} \label{we1} Let $E$ be an injective $T$-module. Then the exact sequence of ~$T$-modules\\
$\xymatrix @R=24pt{
\star \ar@{.>}[r] & E \ar@<+0.3ex>@{.>}[r]^{f} \ar@<-0.3ex>[r] & M \ar[r]^g & N \ar[r] & \star
}$ splits.
\begin{proof} Since $E$ is an injective $T$-module, there exists a morphism of $T$-modules $j$: $M\longrightarrow E$ such that $jf = 1_{E}$.
$$
\xymatrix @R=24pt{
\star \ar@{.>}[r] & E \ar@<+0.5ex>@{.>}[r]^{f} \ar[r] & M \ar[r]^-{g} \ar@/^3ex/[l]^-{j} & N \ar[r] & \star
}
$$
By Proposition~\ref{we},
$\xymatrix @R=24pt{
\star \ar@{.>}[r] & E \ar@<+0.3ex>@{.>}[r]^{f} \ar@<-0.3ex>[r] & M \ar[r]^g & N \ar[r] & \star
}$ splits.
\end{proof}
\end{proposition}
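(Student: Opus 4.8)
The plan is to reduce the statement to Proposition~\ref{we} by using the injectivity of $E$ to produce a retraction of $f$.

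First I would unwind the hypothesis. By the convention adopted just after Lemma~\ref{lem.abs}, the assertion that the displayed sequence is a short exact sequence of $T$-modules means exactly that there is an element $e\in E$ for which all three sequences of Lemma~\ref{lem.abs} are exact; in particular $f\colon E\to M$ is injective, the sequence $\xymatrix{E \ar[r]^f & M \ar[r]^g & N}$ is exact, and $g\colon M\to N$ is surjective. Since an injective $T$-linear map is a monomorphism of $T$-modules, $f$ is, in particular, a monomorphism.

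Secondly, I would feed this monomorphism into the defining property of the injective module $E$: applying it to the monomorphism $i:=f\colon E\to M$ and to the identity homomorphism $1_{E}\colon E\to E$ yields a $T$-linear map $j\colon M\to E$ with $jf=1_{E}$. Thus $f$ admits the retraction $\gamma:=j$.

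Finally, I would invoke Proposition~\ref{we} with $\phi=f$, $\psi=g$, and retraction $\gamma=j$: the map $\psi$ is surjective, $\phi$ admits a retraction, and $\xymatrix{E \ar[r]^f & M \ar[r]^g & N}$ is exact, so Proposition~\ref{we} produces an isomorphism $M\cong E\times N$ of $T$-modules, which is precisely the statement that the short exact sequence splits. I do not expect any genuine obstacle: this is the exact dual of the argument used, via Proposition~\ref{ae}, in the projective Schanuel Lemma, and the only step needing a little care is the first one, namely extracting from the diagrammatic ``short exact sequence'' the three concrete facts ($f$ injective, $g$ surjective, middle row exact) that Proposition~\ref{we} consumes.
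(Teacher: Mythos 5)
Your proposal is correct and follows the paper's own argument exactly: apply injectivity of $E$ to the monomorphism $f$ and the identity $1_E$ to obtain a retraction $j$ with $jf=1_E$, then invoke Proposition~\ref{we} to conclude the sequence splits. The only difference is that you spell out the unwinding of the short-exact-sequence convention ($f$ injective, $g$ surjective, middle row exact), which the paper leaves implicit.
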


\begin{definition}
Let $M$ be a $T$-module and $E$ a submodule of $M$. If there exists a $T$-module $K$ such that $M\cong E\times K$, then we say $E$ is a direct factor of $M$.
\end{definition}

\begin{corollary} Let $M$ be a $T$-module and $E$ a submodule of $M$. If $E$ is an injective $T$-module, then $E$ is a direct factor of $M$.
\begin{proof} By Proposition \ref{we1},~$\xymatrix @R=24pt{
\star \ar@{.>}[r] & E \ar@<+0.3ex>@{.>}[r]^{f} \ar@<-0.3ex>[r] & M \ar[r]^g & M/E\ar[r] & \star
}$~splits. Thus, by Proposition \ref{we}, $M\cong E\times M/E$ as $T$-modules. Then $E$ is a direct factor of $M$.
\end{proof}
\end{corollary}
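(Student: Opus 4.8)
The plan is to fit $E$ into the canonical short exact sequence attached to a submodule and then read off the asserted decomposition from the splitting results already available for injective modules.

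First I would set up the sequence. Let $\iota: E\hookrightarrow M$ be the inclusion; as the inclusion of a submodule it is a $T$-linear injection, hence a monomorphism. Fix $e\in E$ (possible since $E$, being injective, is non-empty), and let $\pi: M\to M/E$ be the canonical projection onto the quotient heap $M/E$, equipped with the $T$-module structure for which $\pi$ is $T$-linear. That structure is well defined because $E$ is closed under the action: if $[m,m',s]\in E$ with $s\in E$ then $t\cdot[m,m',s]=[t\cdot m,t\cdot m',t\cdot s]\in E$ with $t\cdot s\in E$, so $\sim_E$ is compatible with the $T$-action. Using Lemma~\ref{le.se}(3) one checks that $[m,s,s']\in E$ with $s,s'\in E$ forces $m\in E$, whence ${\rm Im}\,\iota=E=\ker_{\bar e}\pi$; by Lemma~\ref{lem.abs} the three sequences built from $\iota$ and $\pi$ are then exact, i.e.
\[
\xymatrix @R=24pt{
\star \ar@{.>}[r] & E \ar@<+0.3ex>@{.>}[r]^{\iota} \ar@<-0.3ex>[r] & M \ar[r]^\pi & M/E \ar[r] & \star
}
\]
is a short exact sequence of $T$-modules in the sense adopted right after Lemma~\ref{lem.abs}.

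Next I would invoke Proposition~\ref{we1}: since $E$ is injective, this sequence splits, so $\iota$ admits a retraction $j: M\to E$ with $j\iota=1_E$. Then Proposition~\ref{we}, applied with $\phi=\iota$ (injective, with retraction $j$) and $\psi=\pi$ (surjective, and with $E\to M\to M/E$ exact), yields an isomorphism of $T$-modules $M\cong E\times (M/E)$. Taking $K=M/E$ in the Definition of direct factor, I would conclude that $E$ is a direct factor of $M$.

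All of these verifications are routine, and the argument is essentially a two-line assembly of Propositions~\ref{we1} and~\ref{we}; the one point I would take care to spell out is the first step — that $M/E$ genuinely inherits a $T$-module structure making $\pi$ $T$-linear and that ${\rm Im}\,\iota=\ker_{\bar e}\pi$ — which is what makes the displayed sequence a legitimate short exact sequence and hence lets the two propositions be applied.
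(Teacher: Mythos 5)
Your proposal is correct and follows essentially the same route as the paper: split the sequence $\star\to E\to M\to M/E\to\star$ via Proposition~\ref{we1} and then apply Proposition~\ref{we} to get $M\cong E\times M/E$. The only difference is that you also verify what the paper leaves implicit, namely that $M/E$ carries a $T$-module structure making the projection $T$-linear and that ${\rm Im}\,\iota=\ker_{\bar e}\pi$, which is a worthwhile (and correct) addition rather than a change of method.
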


\begin{proposition}\label{pe} Let $E_{1}$, $E_{2}$ be $T$-modules, \emph{Abs}$(E_{1})$, \emph{Abs}$(E_{2})$ not empty and $E=E_{1}\times E_{2}$. Then $E$ is an injective $T$-module if and only if $E_{1}$ and $E_{2}$ are injective $T$-modules.
\begin{proof} Let $e_{1}$ $\in$ Abs$(E_{1})$, $e_{2}$ $\in$ Abs$(E_{2})$.\\
Define
\begin{equation*}
\begin{aligned}
\varepsilon_{1}:E_{1}&\longrightarrow E=E_{1}\times E_{2}\\
      m_{1}&\longmapsto (m_{1}, e_{2})\\
\end{aligned}
\end{equation*}

\begin{equation*}
\begin{aligned}
\varepsilon_{2}:E_{2}&\longrightarrow E=E_{1}\times E_{2}\\
      m_{2}&\longmapsto (e_{1}, m_{2})\\
\end{aligned}
\end{equation*}

\begin{equation*}
\begin{aligned}
\pi_{1}:E=E_{1}\times E_{2}&\longrightarrow E_{1}\\
     (m_{1}, m_{2})&\longmapsto m_{1}\\
\end{aligned}
\end{equation*}

\begin{equation*}
\begin{aligned}
\pi_{2}:E=E_{1}\times E_{2}&\longrightarrow E_{2}\\
     (m_{1}, m_{2})&\longmapsto m_{2},\\
\end{aligned}
\end{equation*}
where $m_{1}\in E_{1}$, $m_{2}\in E_{2}$. It is easy to verify that~$\varepsilon_{i}$, $\pi_{i}$ $(i=1, 2)$~are well-defined.\\
\indent Since
\begin{equation*}
\begin{aligned}
\varepsilon_{1}([m_{1}^{\prime}, m_{1}^{\prime\prime}, m_{1}^{\prime\prime \prime}])&=([m_{1}^{\prime}, m_{1}^{\prime\prime}, m_{1}^{\prime\prime \prime}], e_{2})= ([m_{1}^{\prime}, m_{1}^{\prime\prime}, m_{1}^{\prime\prime \prime}], [e_{2}, e_{2}, e_{2}])\\
&=[(m_{1}^{\prime}, e_{2}), (m_{1}^{\prime\prime}, e_{2}), (m_{1}^{\prime\prime \prime}, e_{2})]\\
&=[\varepsilon_{1}(m_{1}^{\prime}), \varepsilon_{1}(m_{1}^{\prime\prime}), \varepsilon_{1}(m_{1}^{\prime\prime\prime})],
\end{aligned}
\end{equation*}

\begin{equation*}
\begin{aligned}
\varepsilon_{1}(tm_{1}^{\prime}) = (tm_{1}^{\prime}, e_{2}) = (tm_{1}^{\prime}, te_{2}) = t(m_{1}^{\prime}, e_{2}) = t\varepsilon_{1}(m_{1}^{\prime})
\end{aligned}
\end{equation*}
for all $m_{1}^{\prime}$, $m_{1}^{\prime \prime}$, $m_{1}^{\prime\prime\prime} \in E_{1}$, $t\in T$, $\varepsilon_{1}$~is a homomorphism of $T$-modules. Analogously, $\varepsilon_{2}$, $\pi_{i}$ $(i=1 ,2)$ are homomorphisms of $T$-modules. Obviously, $\varepsilon_{i}$ $(i=1, 2)$ are monic and $\pi_{i}$ $(i=1, 2)$ are epic.\\
\indent ``$\Rightarrow$" Assume that $M$, $N$ are $T$-modules, $f$ : $M\longrightarrow N$ is a monomorphism of $T$-modules and $g$ : $M\longrightarrow E_{1}$ is a morphism of $T$-modules. Since $E=E_{1}\times E_{2}$ is an injective $T$-module, there exists a morphism of $T$-modules $h$: $N\longrightarrow E=E_{1}\times E_{2}$ such that the following diagram\\
$$\xymatrix @R=24pt{
M\ar[r]^f \ar[d]^g & N \ar@{-->}[dl]_{\gamma} \ar@{-->}[ddl]^h\\
E_1 \ar[d]^{\varepsilon_1}  \\
E=E_1 \times E_2 \ar@<+0.8ex>[u]^{\pi_1}
}
$$
commutes, that is, $hf = \varepsilon_{1}g$. Let $\gamma$ $= \pi_{1}h$ : $N\longrightarrow E_{1}$. Then
$\gamma$$f = \pi_{1}hf = \pi_{1}\varepsilon_{1}g = g$, so $E_{1}$ is an injective $T$-module. Similarly, $E_{2}$ is an injective $T$-module.\\
\indent``$\Leftarrow$'' Assume that $M$, $N$ are $T$-modules, $f$ : $M\longrightarrow N$ is a monomorphism of $T$-modules and $g$ : $M\longrightarrow E=E_{1}\times E_{2}$ is a morphism of $T$-modules. By injectivity of $E_{1}$ and $E_{2}$, there exist morphisms of $T$-modules~$\beta_{i}$ : $N\longrightarrow E_{i}$ ($i=1, 2$) such that the  following diagram\\
$$\xymatrix @R=24pt{
M\ar[r]^f \ar[d]^g & N \ar@{-->}[dl]_{\alpha} \ar@{-->}[ddl]^{\beta_{i}}\\
E=E_1\times E_2 \ar[d]^{\pi_i}  \\
E_i \ar@<+0.8ex>[u]^{\varepsilon_i}
}
$$
commutes, that is, $\beta_{i}f = \pi_{i}g$ ($i=1, 2$). By the universal property of product, there exists a morphism of $T$-modules $\alpha$ : $N\longrightarrow E$ such that $\beta_{i}=\pi_{i}\alpha$, then $\beta_{i}f=\pi_{i}\alpha f=\pi_{i}g$ ($i=1, 2$). Thus, $\pi_{i}g(m)$= $\pi_{i}\alpha f(m)$ for any $m\in M$. Let $g(m)=(s, t)$ and $\alpha f(m)=(x, y)$, then $s=x$ and $t=y$. So $g(m)=\alpha f(m)$, that is to say, $\alpha f = g$. Then~$E$ is an injective $T$-module.
\end{proof}
\end{proposition}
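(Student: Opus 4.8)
The plan is to exploit the canonical injection and projection maps between $E_1$, $E_2$ and their product $E=E_1\times E_2$, reducing both implications to the defining lifting property of injectivity together with the universal property of the product recorded in the product-module structure lemma. First I would fix absorbers $e_1\in\mathrm{Abs}(E_1)$ and $e_2\in\mathrm{Abs}(E_2)$ and introduce $\varepsilon_i\colon E_i\to E$ and $\pi_i\colon E\to E_i$ by $\varepsilon_1(m_1)=(m_1,e_2)$, $\varepsilon_2(m_2)=(e_1,m_2)$, and the obvious projections. The absorber hypothesis is precisely what makes each $\varepsilon_i$ a morphism of $T$-modules: since $t\cdot e_j=e_j$ and $[e_j,e_j,e_j]=e_j$, the constant coordinate is compatible with both the heap operation and the action, so $\varepsilon_i$ respects $[-,-,-]$ and the $T$-action. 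I would then record the retraction identities $\pi_i\varepsilon_i=1_{E_i}$, which are the single algebraic fact driving both directions.

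For the forward implication, assume $E$ is injective and take a monomorphism $f\colon M\to N$ together with a morphism $g\colon M\to E_1$. Post-composing with $\varepsilon_1$ gives $\varepsilon_1 g\colon M\to E$, and injectivity of $E$ supplies $h\colon N\to E$ with $hf=\varepsilon_1 g$. Setting $\gamma=\pi_1 h$ yields $\gamma f=\pi_1 hf=\pi_1\varepsilon_1 g=g$, so $E_1$ is injective, and the symmetric argument with $\varepsilon_2,\pi_2$ handles $E_2$.

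For the converse, assume $E_1$ and $E_2$ are injective and take a monomorphism $f\colon M\to N$ together with $g\colon M\to E$. Breaking $g$ into components $\pi_i g\colon M\to E_i$ and applying injectivity of each $E_i$ produces $\beta_i\colon N\to E_i$ with $\beta_i f=\pi_i g$. The universal property of the product $T$-module then assembles $\beta_1,\beta_2$ into a single morphism $\alpha\colon N\to E$ with $\pi_i\alpha=\beta_i$, whence $\pi_i(\alpha f)=\beta_i f=\pi_i g$ for $i=1,2$. Since an element of $E=E_1\times E_2$ is determined by its two coordinates, this forces $\alpha f=g$, establishing injectivity of $E$.

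The routine part is checking that $\varepsilon_i,\pi_i$ are $T$-linear and satisfy the retraction identities. The main conceptual obstacle is that the category $T$-$\mathrm{mod}$ has no zero object, so neither direction can be phrased in terms of vanishing differences; every step must be carried out with explicit morphisms, and in the converse the conclusion $\alpha f=g$ must be obtained by the coordinatewise comparison above rather than by any kernel-based cancellation. The only other place where genuine care is required is confirming that the expected product universal property is available, which is exactly what the product-module structure lemma provides.
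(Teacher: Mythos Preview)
Your proposal is correct and follows essentially the same approach as the paper: both fix absorbers to define the injections $\varepsilon_i$ and projections $\pi_i$, use the retraction identity $\pi_i\varepsilon_i=1_{E_i}$ to pass from injectivity of $E$ to that of each $E_i$, and use the universal property of the product together with coordinatewise comparison to go back. Your added remarks about why the absorber hypothesis is needed and why the absence of a zero object forces the explicit coordinate argument are accurate and match the paper's reasoning.
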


\begin{definition} Let $T$ be a truss with zero element $0$ and $0\neq t\in T$. If there exists $0\neq t^\prime\in T$ such that $tt^\prime =0$, then we say $t$ is a left absorber of $t^\prime$, $t^\prime$ is a right absorber of $t$. If $t$ is both a left and right absorber of $t^\prime$, then  we say $t$ is  an absorber factor of $T$.
\end{definition}

\begin{proposition} A truss $T$ (with zero element $0$) has no absorber factors if and only if the cancellation law holds in $T$.
\begin{proof} ``$\Rightarrow$"  Let $T$ have no absorber factors. If~$tt^\prime = tt^{\prime\prime}$~for any $0\neq t,~t^\prime, t^{\prime\prime}\in T$, then $[tt^{\prime}, tt^{\prime\prime}, 0] = 0$, and hence $0 = [tt^{\prime}, tt^{\prime\prime}, t0] = t[t^{\prime}, t^{\prime\prime}, 0]$.
Since $T$ has no absorber factors and $0\neq t$, $[t^{\prime}, t^{\prime\prime}, 0] = 0$. By Lemma \ref{le.se}, $t^{\prime} = t^{\prime\prime}$, and so the left cancellation law holds. Similarly, the right cancellation law holds.\\
\indent``$\Leftarrow$"  Assume that $T$ has absorber factors, then for some $0\neq t\in T$, there exists $0\neq t^{\prime}\in T$ such that $tt^{\prime} = t^{\prime}t = 0$.
Due to $tt^{\prime} = t^{\prime}t = 0 = t0 = 0t$, by the cancellation law holds, $t^{\prime} = 0$. Contradictory with the hypothesis, so $T$ has no absorber factors.
\end{proof}
\end{proposition}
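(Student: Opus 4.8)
The plan is to prove the two implications separately, in each case converting a statement of the form ``a product equals $0$'' into a statement about the heap bracket so that Lemma~\ref{le.se}(1) becomes applicable, while using throughout that the zero element satisfies $t\cdot 0 = 0 = 0\cdot t$ for every $t\in T$.

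For the direction ``no absorber factors $\Rightarrow$ cancellation'', I would start from an equality $tt' = tt''$ with $0\neq t\in T$ and aim to deduce $t'=t''$. First rewrite it in bracket form: since $[b,b,a]=a$ in any heap, $[tt',tt'',0]=[tt',tt',0]=0$. Next replace the last entry $0$ by $t\cdot 0$ and pull $t$ out using left distributivity, obtaining $0 = [tt',tt'',t\cdot 0] = t\cdot[t',t'',0]$. Here the hypothesis enters: as $t\neq 0$ and $T$ has no absorber factors, the element $[t',t'',0]$ cannot be a nonzero element annihilated by $t$, so $[t',t'',0]=0$, and then Lemma~\ref{le.se}(1) yields $t'=t''$, i.e.\ left cancellation. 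Right cancellation is the mirror-image computation, using right distributivity and $0\cdot t = 0$.

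For the converse I would argue by contradiction: assume cancellation holds and that some $0\neq t\in T$ is an absorber factor, witnessed by $0\neq t'\in T$ with $tt'=t't=0$. Since also $t\cdot 0 = 0$, we have $tt' = t\cdot 0$, and left cancellation by $t$ (legitimate because $t\neq 0$) forces $t'=0$, contradicting $t'\neq 0$. Hence $T$ has no absorber factors.

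The step I expect to be the main obstacle is the passage, in the forward direction, from $t\cdot[t',t'',0]=0$ with $t\neq 0$ to $[t',t'',0]=0$: the notion of ``absorber factor'' as defined is two-sided, so to invoke it cleanly one should either also record the symmetric relation or simply read ``$T$ has no absorber factors'' as ``no two nonzero elements of $T$ have product $0$ on the relevant side'', and I would make this reading explicit before running the argument. Everything else — the bracket manipulations and the two appeals to Lemma~\ref{le.se} — is routine.
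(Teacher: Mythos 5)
Your proposal follows essentially the same route as the paper's proof: the forward direction uses exactly the same rewriting $0=[tt',tt'',t\cdot 0]=t\cdot[t',t'',0]$ together with Lemma~\ref{le.se}(1), and the converse is the same contradiction via $tt'=t\cdot 0$ and cancellation. The caveat you flag about the two-sided definition of ``absorber factor'' is genuine, but the paper's own proof silently makes the same one-sided reading, so your version is, if anything, slightly more careful on that point.
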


\begin{definition} If $T$ is an abelian truss (with zero element $0$) which has no absorber factors, then we say $T$ is a domain truss.
\end{definition}

\begin{example}
Every domain may be regarded as a domain truss. For example, T$(\mathbb{Z})$ and T$(\mathbb{Z}[x])$.
\end{example}

\begin{definition} Let $T$ be a domain truss and $M$ a $T$-module.
If for any $0\neq t\in T$ and $m\in M$, there exists $m^{\prime}\in M$ such that $m = tm^{\prime}$, then we say $M$ is a divisible $T$-module.
\end{definition}
\begin{example} (1)~$\mathbb{Q}$ is divisible as a module over the T$(\mathbb{Z})$.\\
\indent(2)~Every singleton $T$-module over a domain truss is divisible.\\
\indent(3)~T$(\mathbb{Z})$ is not divisible as a module over itself.
\end{example}

\begin{proposition} Let $T$ be a domain truss, $M_{1}$ and $M_{2}$ $T$-modules. If $M=M_{1}\times M_{2}$ is a divisible $T$-module, then $M_{1}$ and $M_{2}$ are divisible $T$-modules.
\begin{proof} Since $M=M_{1}\times M_{2}$ is a divisible $T$-module, then for any $0\neq t\in T$ and $m=(m_{1}, m_{2})\in M=M_{1}\times M_{2}$, there exists $m^{\prime}=(m_{1}^{\prime}, m_{2}^{\prime})\in M$ such that $m=(m_{1}, m_{2})=tm^{\prime}=t(m_{1}^{\prime}, m_{2}^{\prime})$, and hence $(m_{1}, m_{2})=(tm_{1}^{\prime}, tm_{2}^{\prime})$. So $m_{1}=tm_{1}^{\prime}$ and $m_{2}=tm_{2}^{\prime}$, imply $M_{1}$ and $M_{2}$ are divisible $T$-modules.
\end{proof}
\end{proposition}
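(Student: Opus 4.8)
The plan is to read off divisibility of each factor directly from divisibility of the product, exploiting the componentwise description of the action on $M_1\times M_2$, namely $t\cdot(m_1,m_2)=(t\cdot m_1,t\cdot m_2)$ for all $t\in T$, $m_1\in M_1$, $m_2\in M_2$. The key observation is that an equation of the form $m=t\cdot m'$ in $M_1\times M_2$ splits into the two coordinate equations, so a solution in the product is simultaneously a solution in each coordinate.

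First I would fix $0\neq t\in T$ and an arbitrary $m_1\in M_1$; since $M=M_1\times M_2$ is divisible it is in particular non-empty, so both factors are non-empty and we may choose some $m_2\in M_2$. Applying the divisibility hypothesis to the pair $\bigl(t,(m_1,m_2)\bigr)$ produces $m'=(m_1',m_2')\in M$ with $(m_1,m_2)=t\cdot(m_1',m_2')=(t\cdot m_1',t\cdot m_2')$. Comparing first coordinates gives $m_1=t\cdot m_1'$ with $m_1'\in M_1$, which is exactly the condition required for $M_1$ to be divisible. Exchanging the roles of $M_1$ and $M_2$ — equivalently, comparing second coordinates — shows that $M_2$ is divisible as well.

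I do not anticipate any genuine obstacle: the argument is a direct unwinding of the product $T$-module structure together with the definition of a divisible $T$-module over a domain truss, and it requires neither the heap identities of Lemma~\ref{le.se} nor any induced-action bookkeeping. The only point that deserves an explicit remark is the non-emptiness of the two factors, which is what permits us to form the test element $(m_1,m_2)$; this is automatic once one agrees (as is implicit throughout the paper) that the product under consideration is non-empty, and indeed follows from the fact that a divisible module is taken to be non-empty.
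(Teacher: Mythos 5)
Your proposal is correct and follows essentially the same route as the paper: apply divisibility of $M_1\times M_2$ to a test element $(m_1,m_2)$, use the componentwise action $t\cdot(m_1,m_2)=(t\cdot m_1,t\cdot m_2)$, and compare coordinates. Your extra remark about the non-emptiness of the factors is a harmless refinement that the paper leaves implicit.
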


\begin{theorem} Suppose that the sequence of $T$-modules $\xymatrix @R=24pt{ M \ar[r]^{f} & N \ar[r]^g & P }$ is \emph{Abs}-exact and $g$ is epic. If $M$, $P$ are divisible $T$-modules, then $N$ is a divisible $T$-module.
\begin{proof}
For any $0\neq t\in T$ and $n\in N$, since $P$ is a divisible $T$-module, there exists $p^{\prime}\in P$ such that $g(n)=tp^{\prime}$. As $g$ is epic, there exists $n^{\prime}\in N$ such that $g(n^{\prime})=p^{\prime}$, and hence $g(n)=tg(n^{\prime})$.\\
\indent However, since $\xymatrix @R-24pt{ M \ar[r]^{f} & N \ar[r]^g & P }$ is \emph{Abs}-exact,
$$g[n, tn^{\prime}, tf(m)]=[g(n), g(tn^{\prime}), g(tf(m))]=[g(n), tg(n^{\prime}), t(gf(m))] = t(gf(m)) = te = e,$$ where $f(m)\in$ Im$f$, and hence $[n, tn^{\prime}, tf(m)]\in$ ker$_{e}g$ = Im$f$. Thus, there exists $m^{\prime}\in M$ such that $f(m^{\prime}) = [n, tn^{\prime}, tf(m)]$, and hence $n = [f(m^{\prime}), tf(m), tn^{\prime}]$ by Lemma~\ref{le.se}.\\
\indent Since $M$ is a divisible $T$-module, there exists $m^{\prime\prime}\in M$ such that $m^{\prime}=tm^{\prime\prime}$. So
$$n=[f(m^{\prime}), tf(m), tn^{\prime}]=[f(tm^{\prime\prime}),tf(m),tn^{\prime}]=[tf(m^{\prime\prime}),tf(m),tn^{\prime}]=t[f(m^{\prime\prime}),f(m),n^{\prime}].$$
Therefore $N$ is a divisible $T$-module.
\end{proof}
\end{theorem}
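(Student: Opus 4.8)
The plan is to check the defining property of divisibility for $N$ head on: fixing $0\neq t\in T$ and $n\in N$, I must exhibit $n''\in N$ with $n=tn''$. First I would push $n$ forward through $g$ and use divisibility of $P$: since $g(n)\in P$, there is $p'\in P$ with $g(n)=tp'$, and since $g$ is epic, hence surjective, I may pick $n'\in N$ with $g(n')=p'$, so that $g(n)=tg(n')=g(tn')$. Note also that $M\neq\varnothing$: Abs-exactness gives an $e\in\mathrm{Abs}(P)\cap\mathrm{Im}\,g$ with $\mathrm{Im}\,f=\ker_e g$, and $\ker_e g$ is non-empty (it contains a preimage of $e$), so $\mathrm{Im}\,f\neq\varnothing$; fix any $m\in M$.

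The key step is to correct $n$ so that it lands in $\mathrm{Im}\,f$. I would consider the element $[n,tn',tf(m)]\in N$ and apply $g$. Using that $g$ is $T$-linear, that $g(f(m))=e$ because $f(m)\in\mathrm{Im}\,f=\ker_e g$, that $te=e$ because $e$ is an absorber, and the equality $g(n)=tg(n')$ established above, the Mal'cev identities collapse $g\bigl([n,tn',tf(m)]\bigr)$ to $e$. Hence $[n,tn',tf(m)]\in\ker_e g=\mathrm{Im}\,f$, so there is $m'\in M$ with $f(m')=[n,tn',tf(m)]$, and by the heap identities of Lemma~\ref{le.se} this rearranges to $n=[f(m'),tf(m),tn']$. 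Finally I would invoke divisibility of $M$: choose $m''\in M$ with $m'=tm''$, so $f(m')=tf(m'')$, and distributivity of the action over the heap operation yields
$$n=[f(m'),tf(m),tn']=[tf(m''),tf(m),tn']=t[f(m''),f(m),n'],$$
so $n''=[f(m''),f(m),n']$ works and $N$ is divisible.

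I expect the only genuine subtlety is spotting the correction term $tf(m)$ and seeing why \emph{Abs}-exactness, rather than mere exactness, is what makes the argument go: without $e$ being an absorber one could not force $te=e$, and it is precisely this that pins $g\bigl([n,tn',tf(m)]\bigr)$ down to $e$ instead of to some uncontrolled element $te$. Everything else is a routine bookkeeping exercise with the abelian-heap identities and the module axioms.
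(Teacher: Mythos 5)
Your proposal is correct and follows essentially the same route as the paper's proof: the same correction element $[n,tn',tf(m)]$, the same use of $te=e$ from \emph{Abs}-exactness to show it lies in $\ker_e g=\mathrm{Im}\,f$, and the same final rearrangement via the heap identities and divisibility of $M$. Your explicit remark that $\mathrm{Im}\,f\neq\varnothing$ (so that some $m\in M$ exists) is a small point the paper leaves implicit, but otherwise the arguments coincide.
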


\begin{proposition}
Let $M$ be a $T$-module and $N$ a submodule of M. If $M$ is a divisible $T$-module, then $M/N$ is a divisible $T$-module.
\begin{proof}
Let $\overline{m}\in M/N$. For any $0\neq t\in T$, since $M$ is a divisible $T$-module, there exists $m^{\prime}\in M$ such that $m=tm^{\prime}$, and  hence $\overline{m}=\overline{tm^{\prime}}=t\overline{m^{\prime}}$. Then $M/N$ is a divisible $T$-module.
\end{proof}
\end{proposition}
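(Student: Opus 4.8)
The plan is to push divisibility from $M$ down to the quotient along the canonical projection. Throughout, $T$ is understood to be a domain truss, since divisibility is only defined in that setting, and $M/N$ is taken to be the quotient $T$-module by the submodule $N$.

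First I would make the module structure on $M/N$ explicit. Since $N$ is a submodule of $M$, it is in particular a non-empty sub-heap, so the quotient heap $M/N$ is defined, with classes $\overline{m}$ and operation $[\overline{m_1},\overline{m_2},\overline{m_3}]=\overline{[m_1,m_2,m_3]}$; moreover, because $N$ is closed under the action of $T$, the action descends to $M/N$ via $t\cdot\overline{m}=\overline{t\cdot m}$, giving the induced $T$-module structure for which the canonical projection $\pi\colon M\rightarrow M/N$, $m\mapsto\overline{m}$, is $T$-linear and surjective. This is exactly the quotient construction already used for $M/{\rm Im}f$ and $M/E$ earlier in the paper.

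Then the argument is immediate. Fix an arbitrary $\overline{m}\in M/N$ and an arbitrary $0\neq t\in T$. Since $M$ is divisible, there exists $m^{\prime}\in M$ with $m=tm^{\prime}$. Applying $\pi$ and using its $T$-linearity,
$$\overline{m}=\pi(m)=\pi(tm^{\prime})=t\,\pi(m^{\prime})=t\,\overline{m^{\prime}},$$
so $\overline{m}$ is $t$-divisible in $M/N$. As $\overline{m}$ and $t$ were arbitrary, $M/N$ is a divisible $T$-module.

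I do not expect any genuine obstacle here: the entire content is the one-line diagram fact that divisibility transfers along any surjective $T$-linear map, applied to $\pi$. The only point meriting care is the well-definedness of the $T$-module structure on $M/N$ and the $T$-linearity of $\pi$ — both of which hold precisely because $N$ is a submodule (closed under the heap operation and under the action), so no representative-dependence issue arises. Everything else is a single invocation of the divisibility hypothesis on $M$.
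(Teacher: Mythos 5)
Your proof is correct and follows essentially the same route as the paper: choose a representative $m$ of $\overline{m}$, use divisibility of $M$ to write $m=tm'$, and push this down through the canonical projection to get $\overline{m}=t\overline{m'}$. The only difference is that you spell out the well-definedness of the quotient module structure and the $T$-linearity of $\pi$, which the paper leaves implicit.
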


\begin{proposition} Let $T$ be a domain truss with identity $1$ and zero element $0$. If $T$ is divisible as a module over itself, then for any $0\neq t\in T$, there exists $t^{\prime}\in T$ such that $t^{\prime}t=1$.
\begin{proof} Since $T$ is a divisible $T$-module, then for any $0\neq t\in T$, there exists $t^{\prime}\in T$ such that $1 = tt^{\prime}$. However, $t = 1t = tt^{\prime}t$, this means that $[t, tt^{\prime}t, 0] = 0$. Thus, $[t, tt^{\prime}t, 0] =[t, tt^{\prime}t, t0] = t[1, t^{\prime}t, 0]= 0$. Since $0\neq t\in T$ and $T$ is a domain truss, $[1, t^{\prime}t, 0] = 0$, and hence $t^{\prime}t = 1$ by Lemma \ref{le.se}.
\end{proof}
\end{proposition}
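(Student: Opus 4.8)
The plan is to first extract a one-sided inverse of $t$ from the divisibility hypothesis, and then upgrade it to a two-sided inverse using the cancellation law that is available in a domain truss.

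First I would apply the divisibility of $T$ as a module over itself to the nonzero element $t \in T$ and to the module element $1 \in T$: this produces $t' \in T$ with $1 = t t'$. Thus $t'$ is a right inverse of $t$, and the entire content of the statement is the passage from this right inverse to a two-sided inverse, i.e.\ the verification that $t' t = 1$.

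To do this I would compute, using the identity element and associativity of the multiplication, $t = 1\cdot t = (t t') t = t (t' t)$, so that $t(t't) = t = t\cdot 1$. From here there are two equivalent ways to conclude. The direct route uses only the definition of ``no absorber factors'': from $t(t't) = t$ and the Mal'cev identity $[a,a,b] = b$ we get $[t, t(t't), 0] = 0$; rewriting the left-hand side via $0 = t\cdot 0$ and the distributivity of the action over the heap operation gives $[t, t(t't), t\cdot 0] = t\,[1, t't, 0]$, whence $t\,[1, t't, 0] = 0$; since $t \neq 0$ and $T$ is a domain truss (no absorber factors), this forces $[1, t't, 0] = 0$, and then Lemma~\ref{le.se}(1) yields $t't = 1$. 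Alternatively, one may invoke the earlier proposition that in a truss with no absorber factors the cancellation law holds, and simply left-cancel $t$ (which is nonzero) from the equality $t(t't) = t\cdot 1$ to obtain $t't = 1$ at once.

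I do not expect a genuine obstacle here; the only points that require a little care are that divisibility a priori only yields a right inverse, so the domain-truss hypothesis is essential and enters precisely through cancellation, and that the hypothesis $0\neq t$ is needed both to invoke divisibility and to perform the cancellation.
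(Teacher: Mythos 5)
Your proposal is correct and its main route is exactly the paper's argument: obtain $tt'=1$ from divisibility, derive $t[1,t't,0]=0$ via $t\cdot 0=0$ and distributivity, use the no-absorber-factors hypothesis with $t\neq 0$ to get $[1,t't,0]=0$, and conclude $t't=1$ by Lemma~\ref{le.se}. Your alternative route through the cancellation proposition is only a cosmetic repackaging of the same idea, so there is nothing substantive to add.
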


\begin{theorem} Let $T$ be a domain truss with identity $1$ and $M$ a normalised $T$-module. If $M$ is an injective $T$-module, then $M$ is a divisible $T$-module.
\begin{proof}
Let $M$ be an injective $T$-module. Taking $m\in M$, $0\neq t\in T$. Defined~$f_{t}$ : $T\longrightarrow T$ by~$f_{t}(t^{\prime}) = tt^{\prime}$~for every $t^{\prime}\in T$. It is easy to check that $f_{t}$ is a map.\\
\indent For all $t_{1}, t_{2}, t_{3}\in T$,
\begin{equation*}
\begin{aligned}
f_{t}([t_{1}, t_{2}, t_{3}]) = t[t_{1}, t_{2}, t_{3}] = [tt_{1}, tt_{2}, tt_{3}] = [f_{t}(t_{1}), f_{t}(t_{2}), f_{t}(t_{3})]
\end{aligned}
\end{equation*}
and for all $t_{4}, t_{5}\in T$,
$$f_{t}(t_{4}t_{5}) = t(t_{4}t_{5}) = (tt_{4})t_{5} = (t_{4}t)t_{5} = t_{4}(tt_{5}) = t_{4}f_{t}(t_{5}).$$
Thus,~$f_{t}$ is a homomorphism of $T$-modules.\\
\indent Let~$a$, $b\in$ ker$_{x}f_{t}$, where $x\in$ Im$f_{t}$. Since~$f_{t}(a) = x =f_{t}(b)$, $ta = tb$, and hence $0 = [ta, tb, 0] = [ta, tb, t0] = t[a, b, 0]$. Since $T$ is a domain truss and $0\neq t\in T$, $[a, b, 0] = 0$. By Lemma \ref{le.se}, $a = b$, that is to say, ker$_{x}f_{t}$ is a singleton. By Corollary \ref{ua}, $f_{t}$ is monic.\\
\indent Define
\begin{equation*}
\begin{aligned}
g_{m}: T &\longrightarrow M\\
       x&\longmapsto xm,\\
\end{aligned}
\end{equation*}
where $x\in T$. It is easy to see that $g_{m}$ is well-defined.\\
\indent For all $t^{\prime}_{1}, t^{\prime}_{2}, t^{\prime}_{3}\in T$,
\begin{equation*}
\begin{aligned}
g_{m}([t^{\prime}_{1}, t^{\prime}_{2}, t^{\prime}_{3}]) = [t^{\prime}_{1}, t^{\prime}_{2}, t^{\prime}_{3}]m = [t^{\prime}_{1}m, t^{\prime}_{2}m, t^{\prime}_{3}m] = [g_{m}(t^{\prime}_{1}), g_{m}(t^{\prime}_{2}), g_{m}(t^{\prime}_{3})]
\end{aligned}
\end{equation*}
and for all $t^{\prime}_{4}$, $t^{\prime}_{5}\in T$,
\begin{equation*}
\begin{aligned}
g_{m}(t^{\prime}_{4}t^{\prime}_{5}) = (t^{\prime}_{4}t^{\prime}_{5})m = t^{\prime}_{4}(t^{\prime}_{5}m) = t^{\prime}_{4}g_{m}(t^{\prime}_{5}).
\end{aligned}
\end{equation*}
So $g_{m}$ is a homomorphism of $T$-modules.\\
\indent Since $M$ is an injective $T$-module, there exists a morphism of $T$-modules $h$ : $T\longrightarrow M$ such that $g_{m} = hf_{t}$, that is to say, the following diagram commutes\\
\[
\xymatrix @R=24pt{ T \ar[r]^-{f_{t}} \ar[d]_{g_{m}} & T \ar@{-->}[dl]^-{{h}}
\\
 M. }
\]
\indent Since $T$ is a domain truss with identity 1 and $M$ a normalised $T$-module, $m = 1_{T}m = g_{m}(1_{T}) = hf_{t}(1_{T}) = h(t1_{T})= th(1_{T})$. Thus, $M$ is a divisible $T$-module.\\
\end{proof}
\end{theorem}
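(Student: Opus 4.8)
The plan is to run the classical Baer-style argument showing injective modules are divisible, with the ring-theoretic ingredients replaced by their heap/truss analogues. Fix $m\in M$ and $0\neq t\in T$; the task is to produce $m'\in M$ with $m=tm'$. Regard $T$ as a left module over itself, the action being the truss multiplication. Introduce the two relevant maps: $f_t\colon T\to T$, $s\mapsto ts$ (``multiplication by $t$''), and $g_m\colon T\to M$, $s\mapsto sm$ (``evaluation at $m$''). Both are plainly well-defined functions, and that they are $T$-linear is a routine check: for $f_t$ one uses left distributivity $t[x,y,z]=[tx,ty,tz]$ together with associativity of the multiplication, and for $g_m$ one uses the module axioms $[x,y,z]\cdot m=[xm,ym,zm]$ and $(s's'')m=s'(s''m)$.

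The one place where the hypothesis that $T$ is a domain truss is really used — and the step I expect to need the most care — is showing that $f_t$ is a monomorphism. By Corollary \ref{ua} it suffices to exhibit one element of the codomain whose $f_t$-preimage is a singleton, so suppose $f_t(a)=f_t(b)$, i.e. $ta=tb$. Then $[ta,tb,0]=[ta,ta,0]=0$ by the Mal'cev identities, so by distributivity $0=[ta,tb,t0]=t[a,b,0]$. Since $t\neq 0$ and $T$ has no absorber factors (equivalently, the cancellation law holds in $T$), we get $[a,b,0]=0$, hence $a=b$ by Lemma \ref{le.se}(1). Thus every fibre of $f_t$ is a singleton and $f_t$ is monic. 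One must be slightly careful here that $0$ is the distinguished zero element of the domain truss and that $t0=0$, exactly as in the cancellation argument given earlier in the paper.

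Finally, apply the injectivity of $M$ to the monomorphism $f_t\colon T\to T$ and the morphism $g_m\colon T\to M$: there is a $T$-linear map $h\colon T\to M$ with $hf_t=g_m$. Evaluating at the identity and using that $M$ is normalised,
\[
m=1\cdot m=g_m(1)=h\bigl(f_t(1)\bigr)=h(t\cdot 1)=t\cdot h(1),
\]
so $m':=h(1)$ satisfies $m=tm'$, and therefore $M$ is divisible. The remaining subtleties to keep in mind are simply that the action of $T$ on itself is the multiplication (so that $f_t(1)=t$ and $T$-linearity of $h$ yields $h(t\cdot 1)=t\cdot h(1)$) and that $1\cdot m=m$ precisely because $M$ is normalised — without these two conventions the final evaluation would not close.
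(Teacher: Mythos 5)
Your proposal is correct and follows essentially the same route as the paper's own proof: the same two maps $f_t\colon s\mapsto ts$ and $g_m\colon s\mapsto sm$, the same cancellation argument (via the absence of absorber factors, Lemma \ref{le.se} and Corollary \ref{ua}) to show $f_t$ is monic, and the same final evaluation $m=1\cdot m=g_m(1)=h(f_t(1))=h(t\cdot 1)=t\cdot h(1)$ using that $M$ is normalised. The only small imprecision is that the $T$-linearity of $f_t$, i.e.\ $t(sx)=s(tx)$, requires commutativity of the multiplication and not just associativity; this is harmless because a domain truss is abelian by definition, and it is exactly the step $(tt_4)t_5=(t_4t)t_5$ in the paper's computation.
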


\begin{example}
$\mathbb{Q}$ is both an injective T($\mathbb{Z})$-module and  a divisible T($\mathbb{Z}$)-module.
\end{example}

\begin{remark} \label{qe}
Let $M$ and $N$ be $T$-modules, Then the set ${\rm Hom}_{T}(M,N)$ is an abelian heap with the point wise heap operation.
\end{remark}

\begin{proposition} \label{fe}Let $S$, $T$ be trusses and $f: S\longrightarrow T$ a homomorphism of trusses. Then\\
\indent {\rm (1)}~$T$ is a left $S$-module;\\
\indent {\rm (2)}~If $M$ is a left $S$-module, then \emph{Hom}$_{S}(T, M)$ is a left $T$-module.
\begin{proof} (1) Define
\begin{equation*}
\begin{aligned}
S\times T&\longrightarrow T\\
       (s, t)&\longmapsto s\cdot t = f(s)t\\
\end{aligned}
\end{equation*}
Since\\
\begin{equation*}
\begin{aligned}
  \left[s_{1},s_{2},s_{3}\right]\cdot t&= f([s_{1}, s_{2}, s_{3}])t\\
  &=[f(s_{1}), f(s_{2}), f(s_{3})]t\\
  &=[f(s_{1})t, f(s_{2})t, f(s_{3})t]\\
  &=[s_{1}\cdot t, s_{2}\cdot t, s_{3}\cdot t],
\end{aligned}
\end{equation*}

\begin{equation*}
\begin{aligned}
s_{1}\cdot[t, t^{\prime}, t^{\prime\prime}]&= f(s_{1})[t, t^{\prime}, t^{\prime\prime}]\\
&= [f(s_{1})t, f(s_{1})t^{\prime}, f(s_{1})t^{\prime\prime}]\\
&=[s_{1}\cdot t, s_{1}\cdot t^{\prime}, s_{1}\cdot t^{\prime\prime}],
\end{aligned}
\end{equation*}

\begin{equation*}
\begin{aligned}
s_{1}\cdot(s_{2}\cdot t)&= s_{1}\cdot(f(s_{2})t) = f(s_{1})(f(s_{2})t) \\
&= (f(s_{1})f(s_{2}))t = f(s_{1}s_{2})t = (s_{1}\cdot s_{2})\cdot t
\end{aligned}
\end{equation*}
for all $s_{1}, s_{2}, s_{3} \in S$ and $t, t^{\prime}, t^{\prime\prime}\in T$, $T$ is a left $S$-module.\\

(2) By Remark~\ref{qe}, Hom$_{S}(T, M)$ is an abelian heap.\\
\indent~Define\\
\begin{equation*}
\begin{aligned}
T\times \mathrm{Hom}_{S}(T, M) &\longrightarrow \mathrm{Hom}_{S}(T, M)\\
  (t, g)&\longmapsto (t\cdot g):t^{\prime}\mapsto g(t^{\prime}t)\\
\end{aligned}
\end{equation*}
It is easy to see that $t\cdot g$ is a homomorphism of $T$-modules. \\
Since\\
\begin{equation*}
\begin{aligned}
([t_{1}, t_{2}, t_{3}]\cdot g_{1})(t^{\prime})&= g_{1}(t^{\prime}[t_{1}, t_{2}, t_{3}])=g_{1}([t^{\prime}t_{1}, t^{\prime}t_{2}, t^{\prime}t_{3}])\\
&= [g_{1}(t^{\prime}t_{1}), g_{1}(t^{\prime}t_{2}), g_{1}(t^{\prime}t_{3})]\\
&=[(t_{1}\cdot g_{1})(t^{\prime}), (t_{2}\cdot g_{1})(t^{\prime}), (t_{3}\cdot g_{1})(t^{\prime})]\\
&= [t_{1}\cdot g_{1}, t_{2}\cdot g_{1}, t_{3}\cdot g_{1}](t^{\prime}),
\end{aligned}
\end{equation*}

\begin{equation*}
\begin{aligned}
(t_{1}\cdot[g_{1}, g_{2}, g_{3}])(t^{\prime})&= [g_{1}, g_{2}, g_{3}](t^{\prime}t_{1})=[g_{1}(t^{\prime}t_{1}), g_{2}(t^{\prime}t_{1}), g_{3}(t^{\prime}t_{1})]\\
&=[(t_{1}\cdot g_{1})(t^{\prime}), (t_{1}\cdot g_{2})(t^{\prime}), (t_{1}\cdot g_{3})(t^{\prime})]\\
&= [t_{1}\cdot g_{1}, t_{1}\cdot g_{2}, t_{1}\cdot g_{3}](t^{\prime}),
\end{aligned}
\end{equation*}

\begin{equation*}
\begin{aligned}
((t_{1}\cdot t_{2})\cdot g_{1})(t^{\prime})&=g_{1}(t^{\prime}(t_{1}\cdot t_{2})) = g_{1}((t^{\prime}t_{1})\cdot t_{2})\\
&= (t_{2}\cdot g_{1})(t^{\prime}t_{1}) = (t_{1}\cdot(t_{2}\cdot g_{1}))(t^{\prime})
\end{aligned}
\end{equation*}
for all $t_{1}, t_{2}, t_{3}, t^{\prime}\in T$ and $g_{1}, g_{2}, g_{3}\in$ Hom$_{S}(T, M)$, Hom$_{S}(T, M)$ is a left $T$-module.
\end{proof}
\end{proposition}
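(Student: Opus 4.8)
The plan is to treat the two parts in turn, in each case writing down an explicit operation and checking the module axioms by direct computation; nothing here is deep, so the work is in organizing the verifications cleanly.

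For part (1) I would equip $T$ with the map $S\times T\to T$, $(s,t)\mapsto f(s)t$. Since a truss is in particular an abelian heap, $T$ already carries the required heap structure, so only the three compatibility conditions remain. Distributivity in the first variable, $[s_1,s_2,s_3]\cdot t=[s_1\cdot t,s_2\cdot t,s_3\cdot t]$, is obtained by first using that $f$ is a truss morphism, $f([s_1,s_2,s_3])=[f(s_1),f(s_2),f(s_3)]$, and then the right distributive law in $T$; distributivity in the second variable is immediate from the left distributive law in $T$; and associativity of the action, $s_1\cdot(s_2\cdot t)=(s_1s_2)\cdot t$, reduces to associativity of multiplication in $T$ together with $f(s_1)f(s_2)=f(s_1s_2)$. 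So $T$ is a left $S$-module, and this is exactly the structure needed to make sense of $\mathrm{Hom}_S(T,M)$ in part (2).

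For part (2) the underlying abelian heap on $\mathrm{Hom}_S(T,M)$ is given by Remark \ref{qe} (pointwise heap operation). I would define the $T$-action by $(t\cdot g)(t'):=g(t't)$, where the product $t't$ is taken in the truss $T$. The first and only non-purely-formal step is to check that $t\cdot g$ is again a morphism of left $S$-modules $T\to M$: its heap-linearity follows from the right distributive law in $T$ inside the argument of $g$ combined with heap-linearity of $g$, and its $S$-linearity uses associativity of multiplication in $T$ in the form $(f(s)t')t=f(s)(t't)$ together with the $S$-linearity of $g$. Once $t\cdot g\in\mathrm{Hom}_S(T,M)$ is established, the three module axioms are verified pointwise: distributivity in the $T$-variable comes from the right distributive law in $T$ applied to $t'[t_1,t_2,t_3]=[t't_1,t't_2,t't_3]$; distributivity in the $\mathrm{Hom}$-variable is merely the definition of the pointwise heap operation evaluated at each $t'$; and the associativity identity $((t_1t_2)\cdot g)(t')=(t_1\cdot(t_2\cdot g))(t')$ follows from $t'(t_1t_2)=(t't_1)t_2$ in $T$.

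The main obstacle is bookkeeping rather than mathematics: one has to keep straight that ``$t$ acts by right multiplication inside the argument of $g$,'' which is simultaneously what keeps $t\cdot g$ inside $\mathrm{Hom}_S(T,M)$ and what makes the action associative. Conceptually this is the observation that right multiplication by a fixed $t\in T$ is an endomorphism of the left $S$-module $T$ from part (1), and precomposition with it is functorial; with that in view, each displayed identity collapses to a one-line consequence of associativity and distributivity in the truss $T$ plus linearity of $g$.
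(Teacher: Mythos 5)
Your proposal is correct and follows essentially the same route as the paper: the same action $s\cdot t=f(s)t$ on $T$ for part (1), and the same action $(t\cdot g)(t')=g(t't)$ on $\mathrm{Hom}_S(T,M)$ for part (2), verified by the identical distributivity/associativity computations. If anything, you are slightly more careful than the paper at the one point it glosses over, namely checking that $t\cdot g$ is again an $S$-linear map $T\to M$ (the paper calls it a ``homomorphism of $T$-modules'' there, which should read $S$-modules), so no changes are needed.
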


\begin{proposition}\label{ge} A $T$-module $E$ is injective if only and if for every $T$-module $N$, every submodule $N^{\prime}$ of $N$, and for any homomorphism of ~$T$-modules $\varphi: N^{\prime}\longrightarrow E$, there exists a homomorphism of ~$T$-modules $\theta: N\longrightarrow E$ such that the following diagram\\
\[
\xymatrix @R=24pt{ N^{\prime} \ar[r]^-{i} \ar[d]_{\varphi} & N \ar@{-->}[dl]^-{{\theta}}
\\
 E }
\]
commutes, where $i$ is the inclusion map.
\begin{proof}``$\Rightarrow$" Clearly.\\
\indent``$\Leftarrow$" Let $M$ be a $T$-module and $f:M\longrightarrow N$ monic. So $h=f:M\longrightarrow \mathrm {Im}f$ is an isomorphism of $T$-modules. Write $\delta=gh^{-1}:\mathrm{Im}f\longrightarrow E$, where $h^{-1}$ is the inverse of $h$.
By hypothesis, there exists a homomorphism of $T$-modules $\theta: N\longrightarrow E$ such that $\theta i=gh^{-1}=\delta$. Thus, $\theta ih = gh^{-1}h=\delta h= g$, that is to say, the following diagram
$$\xymatrix @R=50pt{
M\ar[r]^-{h} \ar[d]_{g}& \mathrm{Im}f \ar@<+0.5ex>@{>}[l]^-{h^{-1}} \ar@{-->}[dl]_{\delta} \ar[r]^{i}& N \ar@{-->}[dll]^-{\theta}\\
E
}
$$
commutes, as desired.
\end{proof}
\end{proposition}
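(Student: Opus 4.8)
The plan is to prove the two implications separately; the forward direction is immediate, and the reverse direction is the one with content.

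For ``$\Rightarrow$'', I would simply observe that if $E$ is injective then, given a $T$-module $N$, a submodule $N'\subseteq N$, and a $T$-linear map $\varphi\colon N'\to E$, the inclusion $i\colon N'\to N$ is an injective $T$-linear map and hence a monomorphism of $T$-modules; applying the defining lifting property of an injective module to the pair $(i,\varphi)$ produces a $T$-linear $\theta\colon N\to E$ with $\theta i=\varphi$. There is nothing else to do.

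For ``$\Leftarrow$'', assume the extension property holds along submodule inclusions, and let $f\colon M\to N$ be an arbitrary monomorphism of $T$-modules together with an arbitrary $T$-linear map $g\colon M\to E$; the goal is to construct $\theta\colon N\to E$ with $\theta f=g$. First I would record three facts: (i) $f$ is injective, since every monomorphism of $T$-modules is injective; (ii) $\mathrm{Im}\,f$ is a submodule of $N$ in the sense of the paper, being nonempty, closed under the heap operation because $f$ is a heap morphism, and closed under the action because $f$ is $T$-linear, so $t\cdot f(m)=f(t\cdot m)\in\mathrm{Im}\,f$; and (iii) the corestriction $h\colon M\to\mathrm{Im}\,f$, $m\mapsto f(m)$, is a bijective $T$-linear map, hence an isomorphism in $T$-$\mathrm{mod}$ whose set-theoretic inverse $h^{-1}$ is again $T$-linear. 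Then $\delta:=g\circ h^{-1}\colon\mathrm{Im}\,f\to E$ is $T$-linear, and applying the hypothesis to the submodule $\mathrm{Im}\,f\subseteq N$ and the morphism $\delta$ yields $\theta\colon N\to E$ with $\theta\circ\iota=\delta$, where $\iota\colon\mathrm{Im}\,f\hookrightarrow N$ is the inclusion. Since $f$ factors as $f=\iota\circ h$, a one-line diagram chase gives $\theta f=\theta\iota h=\delta h=g h^{-1}h=g$, so $\theta$ is the desired lift and $E$ is injective.

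I do not expect a genuine obstacle here: the only non-formal ingredients are observations (ii) and (iii) above --- that the image of a $T$-linear map is a submodule, and that a bijective $T$-linear map has a $T$-linear inverse (the heap and action identities transport back along the bijection) --- and both are routine verifications. The rest is the standard ``it suffices to test injectivity against submodule inclusions'' reduction, carried out in the heap/truss setting, where the equivalence ``monic $\Leftrightarrow$ injective'' from the preliminaries does the bookkeeping that an additive ambient category would otherwise handle.
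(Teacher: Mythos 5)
Your proposal is correct and follows essentially the same route as the paper: factor the monomorphism $f$ through its image as $f=\iota\circ h$ with $h$ an isomorphism onto $\mathrm{Im}\,f$, apply the hypothesis to $\delta=g h^{-1}$ along the inclusion $\iota$, and compose. Your extra remarks (that $\mathrm{Im}\,f$ is a submodule and that a bijective $T$-linear map has a $T$-linear inverse) only make explicit verifications the paper leaves implicit.
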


\begin{proposition} Let $f : S\longrightarrow T$  be a homomorphism of trusses, where $T$ is an abelian truss with identity $1$. If $M$ is an injective $S$-module, then \emph{Hom}$_{S}(T, M)$ is an injective $T$-module.
\end{proposition}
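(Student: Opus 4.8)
The plan is to deduce this from the characterisation of injectivity in Proposition~\ref{ge} together with the fact that $\mathrm{Hom}_{S}(T,-)$ behaves, in this truss setting, like a right adjoint of restriction of scalars along $f$. Write $E=\mathrm{Hom}_{S}(T,M)$; by Proposition~\ref{fe}(2) it is a left $T$-module, with action $(t\cdot g)(t')=g(t't)$ for $g\in E$, $t,t'\in T$. By Proposition~\ref{ge} it suffices to show: for every $T$-module $N$, every submodule $N'\subseteq N$, and every $T$-linear map $\varphi\colon N'\to E$, there is a $T$-linear map $\theta\colon N\to E$ with $\theta\circ i=\varphi$, where $i\colon N'\hookrightarrow N$ is the inclusion.

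First I would restrict scalars along $f$: setting $s\cdot n:=f(s)\cdot n$ turns $N$ and $N'$ into $S$-modules, and since $N'$ is closed under the $T$-action it is closed under this $S$-action, so $N'$ is an $S$-submodule of $N$. Next I would transpose $\varphi$ across the adjunction, defining $\overline{\varphi}\colon N'\to M$ by $\overline{\varphi}(n')=\varphi(n')(1)$, that is, evaluation at the identity of $T$. Checking that $\overline{\varphi}$ is $S$-linear is a short computation: it respects the heap operation because $\varphi$ does and the heap operation on $E$ is pointwise, while for the action one uses $T$-linearity of $\varphi$ (so $\varphi(f(s)\cdot n')=f(s)\cdot\varphi(n')$), the explicit formula for the $T$-action on $E$, the $S$-linearity of the map $\varphi(n')\colon T\to M$, and the identity $s\cdot 1=f(s)$ in the $S$-module $T$ of Proposition~\ref{fe}(1).

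Then, since $M$ is an injective $S$-module, Proposition~\ref{ge} applied on the $S$-side produces an $S$-linear map $\overline{\theta}\colon N\to M$ extending $\overline{\varphi}$. I would transpose back, defining $\theta\colon N\to E$ by $\theta(n)(t)=\overline{\theta}(t\cdot n)$ for $n\in N$, $t\in T$, and then verify three points: that each $\theta(n)$ is an $S$-linear map $T\to M$, hence lies in $E$ (heap part pointwise; action part from $S$-linearity of $\overline{\theta}$ and the action axiom $(f(s)t)\cdot n=f(s)\cdot(t\cdot n)$); that $\theta$ is $T$-linear (heap part pointwise; action part from $\theta(t'\cdot n)(t)=\overline{\theta}((tt')\cdot n)=\theta(n)(tt')=(t'\cdot\theta(n))(t)$); and that $\theta\circ i=\varphi$, where for $n'\in N'$ one uses $t\cdot n'\in N'$ (as $N'$ is a $T$-submodule) together with $1\cdot t=t$ to obtain $\theta(n')(t)=\overline{\theta}(t\cdot n')=\overline{\varphi}(t\cdot n')=\varphi(t\cdot n')(1)=(t\cdot\varphi(n'))(1)=\varphi(n')(t)$.

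I do not expect a genuine obstacle: the whole argument is the unit--counit bookkeeping of the restriction--coinduction adjunction, rewritten for heap-based modules. The steps needing the most care are keeping the $S$-action and the $T$-action separated when verifying $S$-linearity of $\overline{\varphi}$ and of each $\theta(n)$, and the easy but essential observation that a $T$-submodule of $N$ is automatically an $S$-submodule, so that Proposition~\ref{ge} is applicable on the $S$-side; one should also confirm that the non-emptiness conventions underlying Proposition~\ref{ge} are respected throughout.
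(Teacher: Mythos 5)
Your proposal is correct and follows essentially the same route as the paper: restrict scalars along $f$, transpose $\varphi$ by evaluation at $1_T$, extend the resulting $S$-linear map using injectivity of $M$ via Proposition~\ref{ge}, and transpose back by $\theta(n)(t)=\overline{\theta}(t\cdot n)$, with the same final computation $\theta(n')(t)=\varphi(t\cdot n')(1)=(t\cdot\varphi(n'))(1)=\varphi(n')(t)$. The only difference is cosmetic: you verify explicitly that each $\theta(n)$ is $S$-linear, a point the paper leaves as ``easy to see.''
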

\begin{proof} By Proposition~\ref{fe}, Hom$_{S}(T, M)$ is a left $T$-module. Let $N$ be a left $T$-module and $N^{\prime}$ a submodule of $N$. Assume that $h : N^{\prime}\longrightarrow \mathrm{Hom}_{S}(T, M)$ is a homomorphism of $T$-modules. Note that $N$ is an $S$-module, with scalar multiplication defined by $s\cdot n = f(s)n$ for all $s\in S$ and $n\in N$. Thus, $N^{\prime}$ is an $S$-submodule of $N$.\\
\indent Define
\begin{equation*}
\begin{aligned}
\varphi: N^{\prime} &\longrightarrow N\\
       n^{\prime}&\longmapsto h(n^{\prime})(1_{T}),\\
\end{aligned}
\end{equation*}
where $n^{\prime}\in N^{\prime}$. It is easy to see that $\varphi$ is well-defined.\\
Since\\
\begin{equation*}
\begin{aligned}
\varphi([n_{1}^{\prime}, n_{2}^{\prime}, n_{3}^{\prime}])&=h([n_{1}^{\prime}, n_{2}^{\prime}, n_{3}^{\prime}])(1_{T})\\
&=[h(n_{1}^{\prime}), h(n_{2}^{\prime}), h(n_{3}^{\prime})](1_{T})\\
&=[h(n_{1}^{\prime})(1_{T}), h(n_{2}^{\prime})(1_{T}), h(n_{3}^{\prime})(1_{T})]\\
&=[\varphi(n_{1}^{\prime}), \varphi(n_{2}^{\prime}), \varphi(n_{3}^{\prime})],
\end{aligned}
\end{equation*}
\begin{equation*}
\begin{aligned} \varphi(s\cdot n_{1}^{\prime})&=h(s\cdot n_{1}^{\prime})(1_{T})=h(f(s)n_{1}^{\prime})(1_{T})\\
&=(f(s)h(n_{1}^{\prime}))(1_{T})\\
&=h(n^{\prime})(f(s)1_{T})=h(n^{\prime})(s\cdot1_{T})\\
&=s\cdot h(n^{\prime})(1_{T})=s\cdot \varphi(n^{\prime})
\end{aligned}
\end{equation*}
for all $s\in S$, $n_{1}^{\prime}, n_{2}^{\prime}, n_{3}^{\prime}\in N^{\prime}$, $\varphi$ is a homomorphism of $S$-modules.\\
\indent Since $M$ is an injective $S$-module, there exists a homomorphism of $S$-modules $\theta: N\longrightarrow M$ such that $\theta i=\varphi$ by Proposition~\ref{ge}, that is to say, the following diagram\\
\[
\xymatrix @R=24pt{ N^{\prime} \ar[r]^-{i} \ar[d]_{\varphi} & N \ar@{-->}[dl]^-{{\theta}}
\\
 M }
\]
commutes.\\
\indent Define
\begin{equation*}
\begin{aligned}
\beta: N &\longrightarrow \mathrm {Hom}_{S}(T, M)\\
       n &\longmapsto \beta(n):t \longmapsto \theta(t\cdot n),\\
\end{aligned}
\end{equation*}
where $n\in N, t\in T$. It is easy to see that $\beta$ is a map. \\
Since\\
\begin{equation*}
\begin{aligned}
\beta([n_{1}, n_{2}, n_{3}])(t)&=\theta(t\cdot[n_{1}, n_{2}, n_{3}])\\
&=\theta([t\cdot n_{1}, t\cdot n_{2}, t\cdot n_{3}])\\
&=[\theta(t\cdot n_{1}), \theta(t\cdot n_{2}), \theta(t\cdot n_{3})]\\
&=[\beta(n_{1})(t), \beta(n_{2})(t), \beta(n_{3})(t)]\\
&=[\beta(n_{1}), \beta(n_{2}), \beta(n_{3})](t),
\end{aligned}
\end{equation*}
\begin{equation*}
\begin{aligned}
(\beta(t^{\prime}\cdot n_{1}))(t)&= \theta(t\cdot(t^{\prime}\cdot n_{1}))\\
&=\theta((tt^{\prime})\cdot n_{1}) = \theta ((t^{\prime}t)\cdot n_{1})\\
&=\beta(n_{1})(t^{\prime}t) = (t^{\prime}\cdot\beta(n_{1}))(t)
\end{aligned}
\end{equation*}
for all $t^{\prime}, t\in T$, $n_{1}, n_{2}, n_{3}\in N$, $\beta$ is a homomorphism of $T$-modules. \\
\indent Since for any $x^{\prime}\in N^{\prime}$ and $t\in T$
\begin{equation*}
\begin{aligned}
\beta(i(x^{\prime}))(t)&= \theta(t\cdot i(x^{\prime})) = \theta(i(t\cdot x^{\prime}))=\varphi(t\cdot x^{\prime})\\
&= h(t\cdot x^{\prime})(1_{T})=t\cdot h(x^{\prime})(1_{T})\\
&=h(x^{\prime})(1_{T}t)=h(x^{\prime})(t).
\end{aligned}
\end{equation*}
That is to say, the following diagram\\
\[
\xymatrix @R=24pt{ N^{\prime} \ar[r]^-{i} \ar[d]_{h} & N \ar@{-->}[dl]^-{{\beta}}
\\
 \mathrm {Hom}_{S}(T, M) }
\]
commutes. By Proposition~\ref{ge}, Hom$_{S}(T, M)$ is an injective $T$-module
\end{proof}

\begin{lemma} {\rm \cite[Lemma 4.11]{tb1}}\label{he}~Let $(T, [-, -, -], \cdot)$ be a truss and let $(M, [-, -, -], \alpha_{M})$ be a left $T$-module. For any set $X$, the heap $M^{X}$ of functions from $X$ to $M$ is a module with a pointwise defined action, for all $t\in T$, $x\in X$ and $f\in M^{X}$,
$$(t\triangleright f)(x)=t\triangleright f(x),$$
i.e. $\alpha_{M^{X}}(t)=\emph{Map}(X, \alpha_{M}(t))$.
\end{lemma}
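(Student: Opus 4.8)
The plan is to equip $M^{X}$ with the evident pointwise structure and to reduce every axiom to a single observation: for each fixed $x\in X$ the evaluation map $\mathrm{ev}_{x}\colon M^{X}\to M$, $f\mapsto f(x)$, is compatible with both the heap operation and the action. Once this is in place, every heap identity and every module identity for $M^{X}$ follows by applying $\mathrm{ev}_{x}$ to both sides, invoking the corresponding identity in $M$, and letting $x$ range over $X$. In effect $M^{X}$ is the $X$-indexed product $\prod_{x\in X}M$, and the argument is the obvious generalization to arbitrary index sets of the binary product construction for $T$-modules recalled above.

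First I would fix the pointwise heap operation $[f,g,h](x):=[f(x),g(x),h(x)]$ on $M^{X}$ and check that $(M^{X},[-,-,-])$ is an abelian heap: associativity and the Mal'cev identities $[f,g,g]=f=[g,g,f]$ hold because they hold in $M$ after evaluating at each $x\in X$, and abelianness of $M^{X}$ follows from abelianness of $M$ in the same pointwise manner (this is the one place the hypothesis that $M$ is an \emph{abelian} heap is used). Next I would define the action by $(t\triangleright f)(x):=t\triangleright f(x)$, i.e.\ $\alpha_{M^{X}}(t)=\mathrm{Map}(X,\alpha_{M}(t))$; this is well defined since $t\triangleright f$ is the composite of $f$ with $\alpha_{M}(t)$ and hence again a function $X\to M$.

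Then I would verify the three module axioms, each by evaluating at an arbitrary $x\in X$: the computation $(t\triangleright(t'\triangleright f))(x)=t\triangleright(t'\triangleright f(x))=(tt')\triangleright f(x)=((tt')\triangleright f)(x)$ gives associativity of the action; $([t,t',t'']\triangleright f)(x)=[t,t',t'']\triangleright f(x)=[t\triangleright f(x),t'\triangleright f(x),t''\triangleright f(x)]=[t\triangleright f,t'\triangleright f,t''\triangleright f](x)$ gives distributivity over the heap operation of $T$; and $(t\triangleright[f,g,h])(x)=t\triangleright[f(x),g(x),h(x)]=[t\triangleright f(x),t\triangleright g(x),t\triangleright h(x)]=[t\triangleright f,t\triangleright g,t\triangleright h](x)$ gives distributivity over the heap operation of $M^{X}$. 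Since $x$ is arbitrary, these identities hold in $M^{X}$, so $M^{X}$ is a left $T$-module with $\alpha_{M^{X}}(t)=\mathrm{Map}(X,\alpha_{M}(t))$, as claimed.

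I do not expect a genuine obstacle: this is a routine pointwise verification, entirely parallel to the case of the binary product $M\times N$. The only two points requiring a little care are (i) that abelianness of $M$ must be explicitly invoked to obtain abelianness of $M^{X}$, and (ii) that $X$ may be infinite, so one cannot formally reduce the statement to the binary product lemma by induction but must carry out the pointwise argument directly (equivalently, observe that the product construction of $T$-modules is valid for an arbitrary index set and apply it to the constant family $(M)_{x\in X}$).
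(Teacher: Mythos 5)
Your proposal is correct, and it is exactly the standard pointwise verification: the paper itself offers no proof of this lemma (it is quoted verbatim from \cite[Lemma 4.11]{tb1}), and the argument there is precisely this routine check that evaluation at each $x\in X$ reduces every heap and module axiom for $M^{X}$ to the corresponding axiom in $M$. Nothing further is needed.
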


\begin{proposition} Let $T$ be a truss and $X$ a non-empty set. If $E$ is an injective $T$-module, then $E^{X}$ is an injective $T$-module.
\begin{proof} By Lemma~\ref{he}, $E^{X}$ is a left $T$-module. Let $M$ be a left $T$-module and $N$ a submodule of $M$. Assume that $\alpha:N\longrightarrow E^{X}$ is a homomorphism of $T$-modules. \\
\indent For each $x\in X$, define
\begin{equation*}
\begin{aligned}
 \alpha_{x}: N &\longrightarrow E\\
       n &\longmapsto \alpha_{x}(n)=\alpha(n)(x),\\
\end{aligned}
\end{equation*}
where $n\in N$. It is easy to see that $\alpha_{x}$ is well-defined.\\
Since\\
\begin{equation*}
\begin{aligned}
\alpha_{x}([n_{1},n_{2},n_{3}])&=\alpha([n_{1},n_{2},n_{3}])(x)\\
&=[\alpha(n_{1}), \alpha(n_{2}), \alpha(n_{3})](x)\\
&=[\alpha(n_{1})(x), \alpha(n_{2})(x), \alpha(n_{3})(x)]\\
&=[\alpha_{x}(n_{1}), \alpha_{x}(n_{2}), \alpha_{x}(n_{3})],
\end{aligned}
\end{equation*}
\begin{equation*}
\begin{aligned}
\alpha_{x}(tn_{1})=\alpha(tn_{1})(x)=t\alpha(n_{1})(x)=t\alpha_{x}(n_{1})
\end{aligned}
\end{equation*}
for all $n_{1}, n_{2}, n_{3}\in N$, $t\in T$, $\alpha_{x}$ is a homomorphism of $T$-modules.\\
\indent Since $E$ is an injective $T$-module, for each $x\in X$ there exists a homomorphism of $T$-modules $\beta_{x}: M\longrightarrow E$ such that $\beta_{x}i=\alpha_{x}$ by Proposition~\ref{ge}, that is to say, the following diagram commutes\\
\[
\xymatrix @R=24pt{ N \ar[r]^-{i} \ar[d]_{\alpha_{x}} & M \ar@{-->}[dl]^-{{\beta_{x}}}
\\
 E. }
\]\\
\indent Define
\begin{equation*}
\begin{aligned}
 \beta: M &\longrightarrow E^{X}\\
       m &\longmapsto \beta(m):x\mapsto \beta_{x}(m),\\
\end{aligned}
\end{equation*}
where $m\in M$. It is easy to check that $\beta$ is a map.\\
Since\\
\begin{equation*}
\begin{aligned}
\beta([m_{1}, m_{2}, m_{3}])(x)&=\beta_{x}([m_{1}, m_{2}, m_{3}])\\
&=[\beta_{x}(m_{1}), \beta_{x}(m_{2}), \beta_{x}(m_{3})]\\
&=[\beta(m_{1})(x), \beta(m_{2})(x), \beta(m_{3})(x)]\\
&=[\beta(m_{1}), \beta(m_{2}), \beta(m_{3})](x),
\end{aligned}
\end{equation*}
\begin{equation*}
\begin{aligned}
\beta(tm_{1})(x)=\beta_{x}(tm_{1})=t\beta_{x}(m_{1})=t\beta(m_{1})(x)
\end{aligned}
\end{equation*}
for all $m_{1}, m_{2}, m_{3}\in M$, $t\in T$, $\beta$ is a homomorphism of $T$-modules.\\
\indent~Since for all $n\in N$ and $x\in X$
\begin{equation*}
\begin{aligned}
\beta(i(n))(x)=\beta_{x}(i(n))=\alpha_{x}(n)=\alpha(n)(x).
\end{aligned}
\end{equation*}
That is to say, the following diagram commutes\\
\[
\xymatrix @R=24pt{ N \ar[r]^-{i} \ar[d]_{\alpha} & M \ar@{-->}[dl]^-{{\beta}}
\\
 E^{X}. }
\]
Therefore $E^{X}$ is an injective $T$-module by Proposition~\ref{ge}.
\end{proof}
\end{proposition}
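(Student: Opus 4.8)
The plan is to reduce injectivity of $E^{X}$ to the submodule-extension criterion of Proposition~\ref{ge}, and then to build the required extension one coordinate at a time, exploiting that the $T$-action on $E^{X}$ is defined pointwise by Lemma~\ref{he}.

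First I would record that $E^{X}$ is a left $T$-module via the pointwise action $(t\triangleright f)(x)=t\triangleright f(x)$ (Lemma~\ref{he}); here the hypothesis $X\neq\varnothing$ ensures $E^{X}$ is non-empty, so the statement concerns an honest $T$-module. By Proposition~\ref{ge}, to prove $E^{X}$ injective it suffices to take an arbitrary left $T$-module $M$, a submodule $N$ of $M$ with inclusion $i\colon N\hookrightarrow M$, and a $T$-linear map $\alpha\colon N\to E^{X}$, and to produce a $T$-linear $\beta\colon M\to E^{X}$ with $\beta i=\alpha$.

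For the construction, fix $x\in X$ and set $\alpha_{x}\colon N\to E$, $\alpha_{x}(n)=\alpha(n)(x)$. Since the heap operation and the action on $E^{X}$ are evaluated pointwise, a routine check shows each $\alpha_{x}$ is $T$-linear. Injectivity of $E$ together with Proposition~\ref{ge} then supplies, for every $x\in X$, a $T$-linear extension $\beta_{x}\colon M\to E$ with $\beta_{x}i=\alpha_{x}$. I would assemble these into $\beta\colon M\to E^{X}$ by $\beta(m)(x)=\beta_{x}(m)$.

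It then remains to verify that $\beta$ is well-defined as a map into $E^{X}$, is a morphism of $T$-modules, and satisfies $\beta i=\alpha$. All three reduce to bookkeeping: evaluating $\beta([m_{1},m_{2},m_{3}])$ and $\beta(tm)$ at any $x\in X$ and invoking the $T$-linearity of $\beta_{x}$ together with the pointwise structure of $E^{X}$ turns each module axiom into the corresponding identity for $\beta_{x}$ in $E$, while for $n\in N$ one has $\beta(i(n))(x)=\beta_{x}(i(n))=\alpha_{x}(n)=\alpha(n)(x)$ for all $x$, so $\beta i=\alpha$. A final application of Proposition~\ref{ge} concludes that $E^{X}$ is injective. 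The only point requiring care is that the coordinatewise definition of $\beta$ genuinely lands in $E^{X}$ and that every $T$-module axiom transports cleanly through evaluation at a point; I do not expect any conceptual obstacle beyond this.
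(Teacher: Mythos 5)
Your proposal is correct and follows essentially the same route as the paper: restrict $\alpha$ coordinatewise to $\alpha_{x}(n)=\alpha(n)(x)$, extend each $\alpha_{x}$ to $\beta_{x}\colon M\to E$ via injectivity of $E$ and Proposition~\ref{ge}, assemble the $\beta_{x}$ into $\beta(m)(x)=\beta_{x}(m)$, and check $T$-linearity and $\beta i=\alpha$ pointwise using the pointwise structure from Lemma~\ref{he}. No gaps; the routine verifications you defer are exactly the ones the paper writes out.
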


Following by the above proposition, $\mathbb{Q}^{\mathbb{Q}}$ is an injective T($\mathbb{Z})$-module.

\begin{theorem}(The Schanuel Lemma on Injective $T$-modules) Let $T$ be a truss. Suppose that the following two sequences of $T$-modules are exact.
$$\xymatrix @R=24pt{
\star \ar@{.>}[r] & M \ar@<+0.3ex>@{.>}[r]^{i} \ar@<-0.3ex>[r] & E \ar[r]^\pi & Q \ar[r] & \star}$$
$$\xymatrix @R=24pt{ \star \ar@{.>}[r] & M \ar@<+0.3ex>@{.>}[r]^{i^{\prime}} \ar@<-0.3ex>[r] & E^{\prime} \ar[r]^{\pi^{\prime}} & Q^{\prime} \ar[r] & \star}.$$\\
Moreover, \emph{Abs}$(Q^{\prime})$ is not empty, $E$ and $E^{\prime}$ are injective $T$-modules.~Then there exists an isomorphism $E\times Q^{\prime}\cong E^{\prime}\times Q$ as $T$-modules. Thus, $Q$ is an injective $T$-module if and if only $Q^{\prime}$ is an injective $T$-module.
\begin{proof} Considering the following commutative diagram:
$$\xymatrix @R=24pt{
\star \ar@{.>}[r] & M \ar@<+0.3ex>@{.>}[r]^{i} \ar@<-0.3ex>[r]& E \ar[r]^{\pi} & Q \ar@{>}[r]^{} & \star\\
\star \ar@{.>}[r] & M \ar@<+0.3ex>@{.>}[r]^{i^\prime} \ar@<-0.3ex>[r] \ar@{>}[u]^{1_{M}} & E^\prime \ar[r]^{\pi^\prime} \ar@{-->}[u]^{\alpha} & Q^\prime\ar@{>}[r]^{}\ar@{-->}[u]^{\beta} & \star}$$\\
\indent Since $E$ is an injective $T$-module, there exists a morphism of $T$-modules $\alpha$ : $E^\prime\longrightarrow E$ such that $\alpha i^\prime = i$. By diagram chasing, there exists a morphism of $T$-modules $\beta$ : $Q^\prime\rightarrow Q$ such that $\beta\pi^\prime = \pi\alpha$.\\
Define
\begin{equation*}
\begin{aligned}
\theta:E^\prime&\longrightarrow E\times Q^\prime\\
      x^\prime&\longmapsto (\alpha(x^\prime), \pi^\prime(x^\prime))\\
\end{aligned}
\end{equation*}
and
\begin{equation*}
\begin{aligned}
\psi:E\times Q^\prime&\longrightarrow Q\\
      (x, q^\prime)&\longmapsto [\pi(x), \beta(q^\prime), \beta(e)],\\
\end{aligned}
\end{equation*}
where $x^\prime\in E^\prime$, $x\in E$, $q^\prime\in Q^\prime$ and $e\in$ Abs$(Q^\prime)$. It is easy to verify that $\theta$ and $\psi$ are well-defined.\\
First of all, since
\begin{equation*}
\begin{aligned}
\theta([x_{1}^\prime, x_{2}^\prime, x_{3}^\prime])&=(\alpha[x_{1}^\prime, x_{2}^\prime, x_{3}^\prime], \pi^\prime[x_{1}^\prime, x_{2}^\prime, x_{3}^\prime])\\
&=([\alpha(x_{1}^\prime), \alpha(x_{2}^\prime), \alpha(x_{3}^\prime)], [\pi^\prime(x_{1}^\prime), \pi^\prime(x_{2}^\prime), \pi^\prime(x_{3}^\prime)])\\
&=[(\alpha(x_{1}^\prime), \pi^\prime(x_{1}^\prime)), (\alpha(x_{2}^\prime), \pi^\prime(x_{2}^\prime)), (\alpha(x_{3}^\prime), \pi^\prime(x_{3}^\prime))],\\
&=[\theta(x_{1}^\prime), \theta(x_{2}^\prime), \theta(x_{3}^\prime)],
\end{aligned}
\end{equation*}
\begin{equation*}
\begin{aligned}
\theta(tx_{4}^\prime) = (\alpha(tx_{4}^\prime), \pi^\prime(tx_{4}^\prime)) = (t\alpha(x_{4}^\prime), t\pi^\prime(x_{4}^\prime)) = t(\alpha(x_{4}^\prime), \pi^\prime(x_{4}^\prime)) = t\theta(x_{4}^\prime)
\end{aligned}
\end{equation*}
for all $x_{1}^\prime, x_{2}^\prime, x_{3}^\prime, x_{4}^\prime\in E^\prime, t\in T$.
And for all $(x_{1}, q_{1}^\prime), (x_{2}, q_{2}^\prime), (x_{3}, q_{3}^\prime), (x_{4}, q_{4}^\prime)\in E\times Q^\prime$, $t\in T$,
\begin{equation*}
\begin{aligned}
\psi([(x_{1}, q_{1}^\prime), (x_{2}, q_{2}^\prime), (x_{3}, q_{3}^\prime)])&=\psi([x_{1}, x_{2}, x_{3}], [q_{1}^\prime, q_{2}^\prime, q_{3}^\prime])=[\pi[x_{1}, x_{2}, x_{3}],  \beta[q_{1}^\prime, q_{2}^\prime, q_{3}^\prime], \beta(e)]\\
&=[[\pi(x_{1}), \pi(x_{2}), \pi(x_{3})], [\beta(q_{1}^\prime), \beta(q_{2}^\prime), \beta(q_{3}^\prime)], [\beta(e), \beta(e), \beta(e)]]\\
&\overset {Lemma~\ref{le.se}}{=}[[\pi(x_{1}), \beta(q_{1}^\prime), \beta(e)], [\pi(x_{2}), \beta(q_{2}^\prime), \beta(e)], [\pi(x_{3}), \beta(q_{3}^\prime), \beta(e)]]\\
&=[\psi(x_{1}, q_{1}^\prime), \psi(x_{2}, q_{2}^\prime), \psi(x_{3}, q_{3}^\prime)],
\end{aligned}
\end{equation*}
\begin{equation*}
\begin{aligned}
\psi(t(x_{4}, q_{4}^\prime))&= \psi(tx_{4}, tq_{4}^\prime)=[\pi(tx_{4}), \beta(tq_{4}^\prime), \beta(e)]\\
&=[\pi(tx_{4}), \beta(tq_{4}^\prime), \beta(te)]=[t\pi(x_{4}), t\beta(q_{4}^\prime), t\beta(e)]\\
&=t[\pi(x_{4}), \beta(q_{4}^\prime), \beta(e)]=t\psi(x_{4}, q_{4}^\prime).\\
\end{aligned}
\end{equation*}
Then $\theta$ and $\psi$ are homomorphisms of $T$-modules.\\
\indent Secondly, it suffices to show that $\theta$ is monic. Aussme that $(\alpha(x^\prime), \pi^\prime(x^\prime)) = (\alpha(y^\prime), \pi^\prime(y^\prime))$, then $\alpha(x^\prime) = \alpha(y^\prime)$, $\pi^\prime(x^\prime) = \pi^\prime(y^\prime)$. As
$\xymatrix @R=24pt{
\star \ar@{.>}[r] & M \ar@<+0.3ex>@{.>}[r]^{i^{\prime}} \ar@<-0.3ex>[r] & E^\prime \ar[r]^{\pi{^\prime}} & Q^\prime \ar[r] & \star}$ is exact, there exists $q^\prime\in Q^\prime$ such that Im$i^\prime$ = ker$_{q}\pi^\prime$. Since $\pi^\prime$ is epic, there exists $z^\prime\in E^\prime$ such that $\pi^\prime(z^\prime) = q^\prime$, this is means that $z^\prime\in$ ker$_{q^\prime}\pi^\prime$ = Im$i^\prime$. Since $i^\prime$ is monic, there exists a unique $m\in M$ such that $i^\prime(m) = z^\prime$. However,
$$\pi^\prime([x^\prime, y^\prime, z^\prime]) = [\pi^\prime(x^\prime), \pi^\prime(y^\prime), \pi^\prime(z^\prime)]\overset {Lemma~\ref{le.se}}{=}\pi^\prime(z^\prime) = q^\prime.$$
This implies that $[x^\prime, y^\prime, z^\prime]\in$ ker$_{q^\prime}\pi^\prime$ = Im$i^\prime$. Since $i^\prime$ is monic, there exists a unique $m^\prime\in M$ such that $i^\prime(m^\prime) = [x^\prime, y^\prime, z^\prime]$. Additional, since
$$i(m^\prime)\overset{i=\alpha i^{\prime}}{=}\alpha i^\prime(m^\prime) = \alpha([x^\prime, y^\prime, z^\prime]) = [\alpha(x^\prime), \alpha(y^\prime), \alpha(z^\prime)]\overset {Lemma~\ref{le.se}}{=}\alpha(z^\prime)$$ and $i(m)\overset{i=\alpha i^{\prime}}{=}\alpha i^\prime(m) = \alpha(z^\prime)$, then $i(m^\prime) = i(m)$. Since $i$ is monic, $m^\prime = m$. Thus, $i^\prime(m^\prime) = i^\prime(m) = z^\prime$, then $x^\prime=y^\prime$, by Lemma \ref{le.se}. Therefore, $\theta$ is monic.\\
\indent Thirdly, to prove that $\psi$ is epic. Since $\pi$ is epic, there exists $x\in E$ such that $\pi(x) = q_{1}$ for any $q_{1}\in Q$. Since $\psi(x, e) = [\pi(x), \beta(e), \beta(e)]\overset {Lemma~\ref{le.se}}{=}\pi(x) = q_{1}$, then proving that $\psi$ is epic.\\
\indent Fourthly, it suffices to show that
$$\xymatrix @R=24pt{
\star \ar@{.>}[r] & E^\prime \ar@<+0.3ex>@{.>}[r]^-{\theta} \ar@<-0.3ex>[r] & E\times Q^\prime\ar[r]^-\psi & Q\ar[r] & \star}$$
is exact. Since $\psi\theta(x^\prime) = \psi(\alpha(x^\prime),~\pi^\prime(x^\prime)) = [\pi\alpha(x^\prime), \beta\pi^\prime(x^\prime), \beta(e)]\overset {\beta\pi^{\prime}=\pi\alpha}{=}\beta(e)$ for any $x^\prime\in E^\prime$, that is to say, Im$\theta\subseteq$ ker$_{\beta(e)}\psi$. Let $(x, q^\prime)\in$ ker$_{\beta(e)}\psi$. So $\psi(x, q^\prime)=[\pi(x), \beta(q^\prime), \beta(e)]=\beta(e)$, then by Lemma~\ref{le.se}, $\pi(x)=\beta(q^\prime)$. Since $\pi^\prime$ is epic, there exists $x^\prime\in E^\prime$ such that $\pi^\prime(x^\prime) = q^\prime$. As $\xymatrix @R=24pt{
\star \ar@{.>}[r] & M \ar@<+0.3ex>@{.>}[r]^{i} \ar@<-0.3ex>[r] & E \ar[r]^\pi & Q \ar[r] & \star}$ is exact, there exists $q\in Q$ such that Im$i$ = ker$_{q}\pi$. So $\beta\pi^\prime(x^\prime)\overset {\beta\pi^{\prime}=\pi\alpha}{=}\pi\alpha(x^\prime)=\beta(q^\prime)=\pi(x)$. Since $\pi$ is epic, there exists $y\in E$ such that $\pi(y) = q$, this means that $y\in$ ker$_{q}\pi$ = Im$i$. Since $i$ is monic, there exists a unique $m\in M$ such that $i(m)=y\overset {\alpha i^{\prime}=i}{=}\alpha i^\prime(m)$. However,
$$\pi([x, \alpha(x^\prime), y]) = [\pi(x), \pi\alpha(x^\prime), \pi(y)]\overset {Lemma~\ref{le.se}}{=}\pi(y) = q.$$
This implies that $[x, \alpha(x^\prime), y]\in$ ker$_{q}\pi$ = Im$i$. Since $i$ is monic, there exists a unique $m^{\prime}\in M$ such that $i(m^\prime)\overset{\alpha i^{\prime}=i}{=}\alpha i^\prime(m^\prime)= [x, \alpha(x^\prime), y]$, then
$$x\overset {Lemma~\ref{le.se}}{=}[\alpha i^\prime(m^\prime), y, \alpha(x^\prime)] = [\alpha i^\prime(m^\prime), \alpha i^\prime(m), \alpha(x^\prime)]= \alpha([i^\prime(m^\prime), i^\prime(m), x^\prime]).$$
Since $[i^\prime(m^\prime), i^\prime(m), x^\prime]\in E^\prime$,
\begin{equation*}
\begin{aligned}
\theta([i^\prime(m^\prime), i^\prime(m), x^\prime])&=(\alpha([i^\prime(m^\prime), i^\prime(m), x^\prime]), \pi^\prime([i^\prime(m^\prime), i^\prime(m), x^\prime]))\\
&=(x, [\pi^\prime i^\prime(m^\prime), \pi^\prime i^\prime(m), \pi^\prime(x^\prime)])\\
&=(x, [q^\prime, q^\prime, q^\prime])\\
&=(x, q^\prime).
\end{aligned}
\end{equation*}
This implies that Im$\theta\supseteq$ ker$_{\beta(e)}\psi$, and hence Im$\theta$ = ker$_{\beta(e)}\psi$. Therefore, proving that the sequence is exact.\\
\indent Finally, to prove that  $\xymatrix @R=24pt{
\star \ar@{.>}[r] & E^\prime \ar@<+0.3ex>@{.>}[r]^-{\theta} \ar@<-0.3ex>[r] & E\times Q^\prime \ar[r]^-\psi & Q \ar[r] & \star}$~splits. Since $E^\prime$ is an injective $T$-module, there exists a morphism of $T$-modules $\gamma$: $E\times Q^\prime\longrightarrow E^\prime$ such that $\gamma\theta = 1_{E^\prime}$.
$$
\xymatrix @R=20pt{
\star \ar@{.>}[r] & E^\prime \ar@<+0.5ex>@{.>}[r]^-{\theta} \ar[r] & E\times Q^\prime \ar[r]^-{\psi} \ar@/^3ex/[l]^-{\gamma} & Q \ar[r] & \star
}
$$
By Proposition \ref{we}, $\xymatrix @R=24pt{
\star \ar@{.>}[r] & E^\prime \ar@<+0.3ex>@{.>}[r]^-{\theta} \ar@<-0.3ex>[r] & E\times Q^{\prime} \ar[r]^-\psi & Q \ar[r] & \star}$~splits, thus $E^\prime\times Q \cong E\times Q^\prime$ as $T$-modules. So $Q$ is an injective $T$-module if and if only $Q^\prime$ is an injective $T$-module by Proposition~\ref{pe}.
\end{proof}
\end{theorem}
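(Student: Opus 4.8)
The statement is the exact dual of the Schanuel Lemma on projective $T$-modules proved above, so the plan is to run the dualised construction step by step. First I would invoke injectivity of $E$: since $i':M\to E'$ is a monomorphism and $i:M\to E$ is $T$-linear, there is a $T$-linear map $\alpha:E'\to E$ with $\alpha i'=i$. Because $\mathrm{Im}\,i=\ker\pi$ and $\mathrm{Im}\,i'=\ker\pi'$, Lemma \ref{lem.abs} together with the analogue of the first isomorphism theorem identifies $\pi$ and $\pi'$ as the canonical projections $E\to E/\mathrm{Im}\,i\cong Q$ and $E'\to E'/\mathrm{Im}\,i'\cong Q'$; since $\pi\alpha$ agrees with the constant map $\pi i$ on $\mathrm{Im}\,i'$, the universal property of the quotient yields a $T$-linear $\beta:Q'\to Q$ with $\beta\pi'=\pi\alpha$. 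Fixing $e\in\mathrm{Abs}(Q')$, I would then set up
\[
\theta:E'\longrightarrow E\times Q',\quad x'\longmapsto(\alpha(x'),\pi'(x')),\qquad \psi:E\times Q'\longrightarrow Q,\quad (x,q')\longmapsto[\pi(x),\beta(q'),\beta(e)].
\]

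The first block of the argument is the routine verification that $\theta$ and $\psi$ are $T$-module homomorphisms. For $\theta$ this is componentwise. For $\psi$, compatibility with the heap operation uses the interchange identity of Lemma \ref{le.se}(4) to rewrite $[\,[\pi x_1,\pi x_2,\pi x_3],[\beta q_1',\beta q_2',\beta q_3'],[\beta e,\beta e,\beta e]\,]$ as $[\,[\pi x_1,\beta q_1',\beta e],[\pi x_2,\beta q_2',\beta e],[\pi x_3,\beta q_3',\beta e]\,]$, and compatibility with the action uses $t\cdot\beta(e)=\beta(e)$, which holds because $e$ is an absorber and $\beta$ is $T$-linear. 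Next I would prove that
\[
\xymatrix @R=24pt{
\star \ar@{.>}[r] & E^\prime \ar@<+0.3ex>@{.>}[r]^-{\theta} \ar@<-0.3ex>[r] & E\times Q^\prime \ar[r]^-{\psi} & Q \ar[r] & \star}
\]
is a short exact sequence. Surjectivity of $\psi$ is immediate: $\psi(x,e)=[\pi(x),\beta(e),\beta(e)]=\pi(x)$ and $\pi$ is epic. For $\theta$ monic, Corollary \ref{ua} reduces the problem to exhibiting a singleton fibre; from $\alpha(x')=\alpha(y')$ and $\pi'(x')=\pi'(y')$ I would pick a base point $q'$ of the second exact sequence, lift it to $z'\in E'$ (so $z'\in\mathrm{Im}\,i'$), observe $[x',y',z']\in\mathrm{Im}\,i'$ since $\pi'[x',y',z']=q'$, pull $z'$ and $[x',y',z']$ back through the monomorphism $i'$ to unique $m,m'\in M$, transport along $\alpha i'=i$ and use $i$ monic to get $m=m'$, hence $[x',y',z']=z'$, i.e.\ $x'=y'$ by Lemma \ref{le.se}. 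In the middle, $\psi\theta(x')=[\pi\alpha(x'),\beta\pi'(x'),\beta(e)]=\beta(e)$ by $\beta\pi'=\pi\alpha$ and Lemma \ref{le.se}, giving $\mathrm{Im}\,\theta\subseteq\ker_{\beta(e)}\psi$.

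The reverse inclusion is where I expect the genuine work. Given $(x,q')$ with $[\pi(x),\beta(q'),\beta(e)]=\beta(e)$, Lemma \ref{le.se} gives $\pi(x)=\beta(q')$; choosing $x'\in E'$ with $\pi'(x')=q'$ (using $\pi'$ epic) and then repeatedly using that $i$ and $i'$ are monomorphisms with $\mathrm{Im}\,i=\ker\pi$ and $\mathrm{Im}\,i'=\ker\pi'$ produces unique elements $m,m'\in M$ witnessing the relevant identities, and a final heap manipulation shows $(x,q')=\theta([i'(m'),i'(m),x'])$. This element chase through the two monomorphisms, relying on uniqueness of preimages, is the only delicate point. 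Once exactness is established, injectivity of $E'$ (equivalently, Proposition \ref{we1} applied to the sequence above) furnishes a retraction $\gamma:E\times Q'\to E'$ of $\theta$, so the sequence splits, and Proposition \ref{we} gives $E\times Q'\cong E'\times Q$ as $T$-modules. The final biconditional is then formal: applying Proposition \ref{pe} to the two product decompositions, $E'\times Q$ is injective iff $E'$ and $Q$ are both injective and $E\times Q'$ is injective iff $E$ and $Q'$ are both injective; since $E,E'$ are injective by hypothesis, the isomorphism forces $Q$ injective $\iff$ $Q'$ injective.
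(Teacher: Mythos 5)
Your proposal follows essentially the same route as the paper's proof: the same maps $\alpha$, $\beta$, $\theta(x')=(\alpha(x'),\pi'(x'))$ and $\psi(x,q')=[\pi(x),\beta(q'),\beta(e)]$, the same use of Lemma \ref{le.se} and Corollary \ref{ua} for the homomorphism and monomorphism checks, the same element chase for $\mathrm{Im}\,\theta=\ker_{\beta(e)}\psi$, and the same conclusion via the retraction from injectivity of $E'$, Proposition \ref{we}, and Proposition \ref{pe}. Your construction of $\beta$ through the quotient identification $E'/\mathrm{Im}\,i'\cong Q'$ merely makes explicit what the paper calls ``diagram chasing,'' so there is no substantive difference.
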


\textbf{Yongduo Wang}\\
Department of Applied Mathematics, Lanzhou University of Technology, 730050 Lanzhou, Gansu, P. R. China\\
E-mail: \textsf{ydwang@lut.edu.cn}\\[0.3cm]
\textbf{Shujuan Han}\\
Department of Applied Mathematics, Lanzhou University of Technology, 730050 Lanzhou, Gansu, P. R. China\\
E-mail: \textsf{1690018130@QQ.com}\\[0.3cm]
\textbf{Dengke Jia}\\
Department of Applied Mathematics, Lanzhou University of Technology, 730050 Lanzhou, Gansu, P. R. China\\
E-mail: \textsf{1719768487@QQ.com}\\[0.3cm]
\textbf{Jian He}\\
Department of Applied Mathematics, Lanzhou University of Technology, 730050 Lanzhou, Gansu, P. R. China\\
E-mail: \textsf{jianhe30@163.com}\\[0.3cm]
\textbf{Dejun Wu}\\
Department of Applied Mathematics, Lanzhou University of Technology, 730050 Lanzhou, Gansu, P. R. China\\
E-mail: \textsf{wudj@lut.edu.cn}\\[0.3cm]
\end{document}